\pgfplotsset{width=\linewidth,compat=newest}
\definecolor{color1}{RGB}{0,139,0} 
\definecolor{color2}{RGB}{154,255,154} 
\numberwithin{equation}{section}
\newcommand{\N}{\mathbb{N}}
\newcommand{\push}{\emph{push}\xspace}
\newcommand{\pull}{\emph{pull}\xspace}
\newcommand{\Push}{\emph{push}\xspace}
\newcommand{\Pull}{\emph{pull}\xspace}
\def\@endtheorem{\endtrivlist}
\newtheoremstyle{abcd}
  {}
  {}
  {\itshape}
  {}
  {\bfseries}
  {.}
  {.5em}
  {}
\theoremstyle{abcd}
\newtheorem{theorem}{Theorem}
\numberwithin{theorem}{section}
\newtheorem{corollary}[theorem]{Corollary}
\newtheorem{lemma}[theorem]{Lemma}
\renewcommand{\d}{\textnormal{d} }
\begin{document}

\title{Asymptotics for Pull on the Complete Graph}

\author{Konstantinos Panagiotou \and Simon Reisser}


\date{Ludwig-Maximilians-Universität München\\
Monday 18$^{th}$ Oktober, 2021}


\maketitle 

\begin{abstract}
We study the randomized rumor spreading algorithm \emph{pull} on complete graphs with $n$ vertices. 
Starting with one informed vertex and proceeding in rounds, each vertex yet uninformed connects to a neighbor chosen uniformly at random and receives the information, if the vertex it connected to is informed.
The goal is to study the number of rounds needed to spread the information to everybody, also known as the \emph{runtime}.

In our main result we provide a description, as $n$ gets large, for the distribution of the runtime that involves a martingale limit.
This allows us to establish that in general there is no limiting distribution and that convergence occurs only on suitably chosen subsequences $(n_i)_{i \in \mathbb{N}}$ of $\mathbb{N}$, namely when the fractional part of $(\log_2 n_i +\log_2\ln n_i)_{i \in \mathbb{N}}$ converges.
\end{abstract}



\thispagestyle{empty} 


\section{Introduction}
Randomized rumour spreading has applications in replicated databases \cite{demers_epidemic_1987}, mobile networks \cite{Iwanicki2009}, epidemic modelling \cite{Berger2005} and even crypto-currency \cite{Patsonakis2019}. Given a graph, the algorithm/protocol \Pull works as follows.
We start by selecting a vertex and equipping it with some piece of information.
Then we proceed in rounds, in which every vertex yet uninformed connects to a neighbor chosen uniformly at random and receives the information, if the vertex it connected to is informed.
We will study the number of rounds needed to spread the information to all vertices, also known as \emph{runtime}.
For a graph $G=(V,E)$ and any vertex~$v\in V$ we denote the (random) runtime of \Pull on $G$ with starting vertex~$v$ by~$X(G,v)$.
The most basic case, and the one studied here, is to set $G=K_n$, the complete graph on~$n$ vertices.
In that case specifying the initial vertex is not necessary; we therefore just write~$X_n$ for the runtime of \Pull on $K_n$.

\paragraph{Related Work}

Randomized rumour spreading has been researched intensively since its introduction and popularization in \cite{Frieze1985,Karp2000}.
One direction of research describes the runtime of randomized rumour spreading protocols using only general graph parameters like conductance \cite{Chierichetti2018}, diameter \cite{Feige1990} or expansion \cite{Giakkoupis2014}.
A different direction is to obtain ever more precise bounds on the runtime on specific graph classes \cite{Fountoulakis2010, Clementi2016, Daknama2020}.
One major step in that direction was achieved in \cite{Doerr2017}, where the authors studied the runtime of \Pull on the complete graph. They showed that 
\begin{align*}
    \mathbb E [X_n]= \log_2 n + \log_2\ln n +O(1),
\end{align*}
as well as the related large deviation bound
\begin{align}\label{eq:LargeDev}
    P\big(|X_n- \mathbb E[X_n]|\ge r\big)\le Ae^{-\alpha r}\quad \text{for suitable } A,\alpha>0\text{ and all } r\in \mathbb N.
\end{align}
Actually, in~\cite{Doerr2017}  qualitatively similar results for several other rumor spreading protocols were shown.
In particular they studied the protocol \Push, which differs from \pull in the way the information is spread from vertex to vertex: in \push, each \emph{informed} vertex chooses a uniformly random vertex and passes the information forward if the targeted vertex is uninformed.

Regarding \push we have by now a much more precise picture.
In \cite{Daknama2021} the distribution of the runtime was described for large $n$. Let $X_n^\push$ be the runtime of $\push $ on the complete graph, $\gamma$ the Euler-Mascheroni constant, $G$ a Gumble distributed random variable with parameter~$\gamma$ and~$c$ a specific 1-periodic function with amplitude about $10^{-9}$.
Then in \cite{Daknama2021} it was shown that, as $n\to\infty$,
\begin{align*}
    \sup_{k\in \mathbb N}\left|P(X_n^\push\ge k)-P\big(\lceil\log_2 n +\ln n +G+\gamma + c(\log_2 n -\lfloor\log_2 n\rfloor)\rceil\ge k \big )\right|=o(1).
\end{align*}
This result implies, see \cite{Daknama2021}, that there is a limiting distribution only on suitable subsequences, that is on sequences $(n_i)_{i\in \mathbb N}$ such that for some $x,y\in [0,1)$ $\log_2n_i-\lfloor\log_2n_i\rfloor\to x$ and $\ln n_i-\lfloor\ln n_i\rfloor\to y$.
Moreover, it shows that the expected runtime of \push converges only on such sequences as well, with the consequence that
\begin{align*}
    \log_2 +\ln n+1.18242\le \mathbb E[X_n^\push]\le \log_2+\ln n +1.18263, \quad n\in\mathbb{N},
\end{align*}
where both bounds are (essentially) achieved for appropriate subsequences of natural numbers. This improved upon a longish list of previous papers \cite{Frieze1985, pittel_on_1987, Doerr_2014,Doerr2017}, where sharper and sharper results for the runtime of \push were derived.

All randomized rumor spreading protocols described here proceed in rounds. In~\cite{Janson1999} minimal path lengths on graphs with random edge weights were studied. If the weights are exponentially distributed, this problem is equivalent to the so-called \emph{asynchronous} \pull, where instead of having rounds each uninformed vertex chooses independently neighbours according to a rate-1 Poisson process. In~\cite{Janson1999} it was shown that, after appropriate normalization, the limiting distribution of the runtime of asynchronous \Pull on complete graphs converges and the limit is the sum of two independent Gumbel distributed random variables. 
Some more recent results on asynchronous rumor spreading are \cite{Giakkoupis2016, Panagiotou2017, Pourmiri2020}.


\paragraph{Results}
Our main result describes the distribution of the runtime of \Pull on complete graphs.
\begin{theorem}\label{MainLemma}
There is a continuous random variable $X$ such that, as $n\to\infty$,
\begin{align*}
\sup_{k\in \mathbb{N}} 
    \big|P(X_n \ge  k)-P(\lceil\log_2 n +\log_2\ln n +X \rceil\ge k)\big| = o(1).
\end{align*}
\end{theorem}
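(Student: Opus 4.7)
The plan is to split the evolution of $V_t := n - U_t$, the number of informed vertices, into a random early phase and a nearly deterministic late phase, and to capture the distributional limit of the runtime via the limit of a martingale associated with the early growth. Since, conditionally on $V_t$, the increment $V_{t+1} - V_t$ is $\mathrm{Bin}(n - V_t, V_t/(n-1))$-distributed, one has $\mathbb{E}[V_{t+1} \mid V_t] = V_t(2n-1-V_t)/(n-1)$, which yields the non-negative martingale
\[
M_t := V_t \prod_{s=0}^{t-1} \frac{n-1}{2n-1-V_s}, \qquad M_0 = 1.
\]
This agrees with $V_t/2^t$ up to a factor $1+o(1)$ as long as $V_t = o(n)$. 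A second-moment estimate exploiting $\mathrm{Var}(V_{t+1}\mid V_t) = O(V_t)$ in this range yields $L^2$-boundedness of $M_t$ up to the stopping time $\tau := \inf\{t : V_t \ge (\ln n)^2\}$, and hence convergence of $M_\tau$ in distribution to a non-negative random variable $W$ as $n \to \infty$.

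In the second phase, conditionally on $V_\tau$, the process is concentrated around its mean trajectory. The recursion $\mathbb{E}[U_{t+1}\mid U_t] = U_t(U_t-1)/(n-1)$ gives the approximate squaring map $u_{t+1} = u_t^2(1+o(1))$ for $u_t := U_t/n$, and iterated Chernoff bounds yield, with probability $1-o(1)$,
\[
U_t = n\bigl(U_\tau/n\bigr)^{2^{t-\tau}}(1+o(1)) \quad \text{as long as } U_t \gg 1.
\]
Solving $U_t = O(1)$ and substituting $V_\tau = 2^\tau M_\tau(1+o(1))$ together with $\ln(n/U_\tau) = V_\tau/n + O((V_\tau/n)^2)$ gives total real-valued runtime $\log_2 n + \log_2 \ln n - \log_2 M_\tau + o_p(1)$. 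A single further round clears the residual $O(1)$ uninformed vertices with probability $1-o(1)$. Setting $X := -\log_2 W$ then yields $X_n = \lceil\log_2 n + \log_2 \ln n + X\rceil$ with probability $1 - o(1)$, provided the argument of the ceiling is not within $o(1)$ of an integer.

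The main obstacle is to establish that $W$ has a continuous distribution, since otherwise an atom at some $w_0$ would produce, for $n$ along a suitable subsequence and $k$ near $\log_2 n + \log_2 \ln n - \log_2 w_0$, a non-vanishing gap in the supremum and ruin the $o(1)$ conclusion. I would rule this out by coupling the first $O(\log \ln n)$ rounds to a continuous-time Yule process, whose corresponding martingale limit is exponentially distributed. In this early regime each round contributes an increment to the number of informed vertices that is essentially $\mathrm{Poisson}(V_t)$ for $V_t$ small, so the two martingales can be matched up to a multiplicative $1+o(1)$ error, and continuity of the exponential limit transfers to $W$. Combining this continuity with the phase-by-phase error bounds, so that the argument of the ceiling falls within $o(1)$ of an integer only with vanishing probability, yields the uniform $o(1)$ estimate in $k$.
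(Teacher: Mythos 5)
Your overall plan matches the paper's: a random early phase captured by a martingale limit, a near-deterministic squaring phase for the number of uninformed vertices, and a final one-round cleanup, then $X := -\log_2$ of the martingale limit. The paper realizes the same structure via an $n$-independent comparison process $J_{t+1} = J_t + \mathrm{Po}(J_t)$, a martingale $H_t = 2^{-t}J_t$, and then Lemmas~\ref{Lemma2} and~\ref{Lemma3} for the later phases. Your choice of martingale, $M_t = V_t\prod_{s<t}\frac{n-1}{2n-1-V_s}$ stopped at $\tau = \inf\{t: V_t \ge (\ln n)^2\}$, is an $n$-dependent object, so you still owe an argument that the laws of $M_\tau^{(n)}$ converge to a single limit $W$ as $n\to\infty$; this is precisely what the paper resolves by first coupling $|I_t|$ to the fixed process $J_t$ in total variation (Lemma~\ref{poisApp}) and only then invoking $\mathcal L^2$ martingale convergence.

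The decisive gap, however, is your continuity argument for $W$. The comparison process here is a discrete-time Galton--Watson process with offspring distribution $1+\mathrm{Po}(1)$ (pgf $s\mapsto s e^{s-1}$), and its Kesten--Stigum-type martingale limit is \emph{not} exponentially distributed. Coupling the first $O(\log\ln n)$ rounds to a continuous-time Yule process does not help: a Yule process observed at unit time steps has geometric (not $1+\mathrm{Po}$) per-step offspring and expected growth factor $e$ rather than $2$, so the two round-to-round transition kernels, and hence the two martingale limits, differ at order one. ``Matched up to a multiplicative $1+o(1)$ error'' is simply false, and continuity of the exponential limit does not transfer. This is in fact the hardest part of the paper: it proves absolute continuity by an explicit Fourier-analytic computation, showing $\varphi_t(x) = h^{(t)}(e^{ix2^{-t}})$ with $h(x) = xe^{x-1}$ (Lemma~\ref{phiItLemma}), then deriving a delicate second-order expansion of the real/imaginary parts of $\varphi_t$ (Lemma~\ref{Fseries}) to conclude $|\varphi_t(x)|\le |x|^{-1.2}$ for large $|x|$ (Lemma~\ref{HCharInt}), which gives integrability of the characteristic function. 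A related omission in your argument: you need $W>0$ almost surely for $X=-\log_2 W$ to be a genuine real-valued random variable; nonnegativity of the martingale only gives $W\ge 0$, and ruling out an atom at $0$ is another consequence the paper draws from the continuity of $H$ (Lemma~\ref{Hpos}), so it cannot be deferred.
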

Actually, we can provide some information about $X$. To this end, let us write $I_t$ for the set of informed vertices at the start of round $t$. In particular, $|I_0| = 1$. By definition of the protocol, every uninformed vertex becomes informed in round $t$ independently with probability $|I_t|/n$. That is, in distribution
\begin{align*}
	|I_{t+1}|= |I_t|+\text{Bin}\big(n-|I_t|,|I_t|/n\big).
\end{align*}
If $|I_t| = o(n)$, then the binomial distribution is very close to being Poisson, and we thus may approximate $|I_t|$ by the sequence of random variables given by
\begin{align*}
    J_0 = 1, ~~\text{and}~~ 
	J_{t+1}= J_t+\text{Po}(J_t),~~ t \in \mathbb N_0.
\end{align*}
This sequence doubles every round in expectation, $\mathbb{E}[J_t] = 2^t$. Moreover, it is fairly easy to establish that $(H_t)_{t \in \mathbb N_0}$ with $H_t = 2^{-t}J_t$ is a martingale and uniformly integrable. Thus, the Martingale Convergence Theorem guarantees the existence of a random variable~$H$ such that~$H_t$ converges almost surely to $H$. 
In essence, $2^{-t}|I_t|$ is 'close' to $H$ for large $t$ (and $n$); we formalize this statement in Lemma~\ref{martConver} below. The random variable $X$ is then given as $X=-\log_2 H$.

From this construction of $H$ we may obtain further information about its distribution. For example, the characteristic function $\varphi$ of $H$ has the property $\varphi = \lim_{t\to\infty}\varphi_t$, where $\varphi_t$ is the characteristic function of  $H_t$, by Levy's Continuity Theorem.
Using this we establish in Section~\ref{Lemma1} that~$H$ is continuous and almost surely positive, two main ingredients in the proof of Theorem~\ref{MainLemma}. However, finding more properties of $H$, like a handy expression for its density or expressions for its moments, turned out to be a tough challenge that we leave as an open problem.

Let us denote with $(X+x)\!\!\mid_{\mathbb{Z}}$ the distribution of $X$ translated by $x$ and restricted to integers only, that is, $(X+x)\!\!\mid_{\mathbb{Z}}$ is the random variable with domain $\mathbb{Z}$ and distribution 
\begin{align*}
    P\big((X+x)\!\!\mid_{\mathbb{Z}}\ \le k\big)
    := P(X\le k-x), \quad \ k\in \mathbb{Z} .
\end{align*}
With this definition at hand and by choosing a suitable subsequence we can derive a limiting distribution from Theorem \ref{MainLemma}. 
\begin{corollary}\label{MainCorollary}
Let $x\in [0,1)$ and let $n_i$ be a strictly increasing sequence of  natural numbers such that $\log_2n_i+\log_2\ln n_i-\lfloor \log_2n_i+\log_2\ln n_i \rfloor \to x$. Then, as $i\to \infty$, in distribution
\begin{align*}
X_{n_i}-\lfloor \log_2n_i+\log_2\ln n_i \rfloor \to (X+x)\!\!\mid_{\mathbb{Z}}.
\end{align*}
\end{corollary}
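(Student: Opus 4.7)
The plan is to unpack Theorem~\ref{MainLemma} for the shifted random variable $Y_i := X_{n_i}-\lfloor a_{n_i}\rfloor$, where $a_n:=\log_2 n+\log_2\ln n$ and $\{a_n\}:=a_n-\lfloor a_n\rfloor$. Since convergence in distribution for $\mathbb{Z}$-valued random variables is equivalent to pointwise convergence of the tail probabilities $P(Y_i\ge m)$ for each $m\in\mathbb{Z}$, I would fix $m\in\mathbb{Z}$ and track how $P(X_{n_i}\ge k)$ behaves for $k=m+\lfloor a_{n_i}\rfloor$. For $i$ large enough, $k\in\mathbb{N}$, so Theorem~\ref{MainLemma} yields
\begin{align*}
P(Y_i\ge m)=P(X_{n_i}\ge k)=P\bigl(\lceil a_{n_i}+X\rceil\ge k\bigr)+o(1).
\end{align*}

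Next, I would exploit the continuity of $X$ guaranteed by Theorem~\ref{MainLemma}: the random variable $a_{n_i}+X$ has no atoms, so almost surely $\lceil a_{n_i}+X\rceil\ge k$ is equivalent to $a_{n_i}+X>k-1$. Substituting $k-1-a_{n_i}=m-1-\{a_{n_i}\}$ gives
\begin{align*}
P(Y_i\ge m)=P\bigl(X>m-1-\{a_{n_i}\}\bigr)+o(1).
\end{align*}
By hypothesis $\{a_{n_i}\}\to x$, and the distribution function of $X$ is continuous, so as $i\to\infty$ this converges to $P(X>m-1-x)=P(X\ge m-1-x)$, again by continuity.

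Finally, I would match this limit with the definition of $(X+x)\!\mid_{\mathbb{Z}}$: directly from the definition,
\begin{align*}
P\bigl((X+x)\!\mid_{\mathbb{Z}}\,\ge m\bigr)=1-P(X\le m-1-x)=P(X>m-1-x),
\end{align*}
which agrees with the previous display. Hence $P(Y_i\ge m)\to P((X+x)\!\mid_{\mathbb{Z}}\,\ge m)$ for every $m\in\mathbb{Z}$, which is convergence in distribution.

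This is really just an unwrapping of Theorem~\ref{MainLemma}, so I do not anticipate a serious technical obstacle. The only subtle points are (i) ensuring that $k=m+\lfloor a_{n_i}\rfloor\in\mathbb{N}$ so that the supremum in Theorem~\ref{MainLemma} applies, which holds for all sufficiently large $i$ since $\lfloor a_{n_i}\rfloor\to\infty$; and (ii) using continuity of $X$ both to replace the ceiling by a strict inequality and to pass to the limit in $\{a_{n_i}\}\to x$ without leaving a boundary atom behind. Both uses are critical, which explains why the continuity of $X$ established earlier is flagged as a main ingredient.
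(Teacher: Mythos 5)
Your proof is correct and follows essentially the same route as the paper: apply Theorem~\ref{MainLemma} at $k=\lfloor a_{n_i}\rfloor+m$, unwind the ceiling, and use continuity of $X$ to pass $\{a_{n_i}\}\to x$ through the limit and to match the definition of $(X+x)\mid_{\mathbb{Z}}$. One small quibble: the equivalence $\lceil y\rceil\ge k \iff y>k-1$ for integer $k$ is deterministic, not a consequence of $X$ being atomless — continuity is only needed in the final limiting step, as you correctly flag there.
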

This corollary warrants some further remarks.
First of all, it is not immediately clear that a sequence with the required properties exists, at least it was not to us.
Luckily it requires only moderate effort to find a suitable one. For example, we may choose
$$
    n_i
    =
    \big\lfloor\exp(W(2^{i+x}))\big\rfloor,
    \quad
    i \in \mathbb{N},
$$
where $W$ is the principal branch of the Lambert $W$ function (or product logarithm). A key property of $W$ is that $W(x)e^{W(x)}=x$ for all $x>-1/e$.
As $W(z)= (1+o(1))\ln z$ for large $z$, see for example~\cite{Corless1996}, the sequence $(n_i)_{i \in \mathbb{N}}$ is strictly increasing. 
Thus, as $i$ gets large,
$$
    2^{\log_2n_i+\log_2\ln n_i}
    = n_i\ln n_i
    = \big(1+o(1)\big) \cdot \exp\big(W(2^{i+x})\big)W(2^{i+x})=2^{i+x+o(1)}.
$$
Secondly, we can actually say more. Large deviation bounds for \emph{pull}, see \eqref{eq:LargeDev}, yield that $(X_n)^k$ is absolutely integrable for all $k\in \mathbb{N}$ and thus convergence in distribution also implies convergence of all moments. In particular, for sequences as in Corollary~\ref{MainCorollary} 
$$ 
    \mathbb{E}
    \left[\big(X_{n_i}-\lfloor \log_2n_i+\log_2\ln n_i \rfloor\big)^k\right] \to \mathbb{E}\left[\big((X+x)\!\!\mid_{\mathbb{Z}}\big)^k\right]\quad \ \forall\ k \in \mathbb{N}.
$$

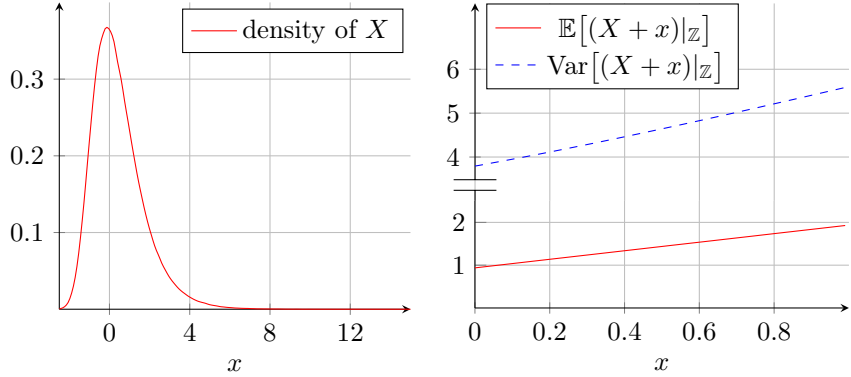
\begin{figure}[t!]
    \centering
            \begin{minipage}{0.03\textwidth}
    \end{minipage}
    \begin{minipage}{0.37\textwidth}
\centering
 		\begin{tikzpicture}
    		\begin{axis}[ymin=0,ymax=0.4, axis x line=bottom, axis y line=left, xlabel=$x$,ytick={0.1,0.2,0.3}, xtick={0,4,8,12},
    		grid=both,
    		height=5.65cm, width=6.2cm]
      			\addplot[no markers, color=red]  file {Dichte.txt};
      			\addlegendentry{{density of $X$}}
    		\end{axis}
  		\end{tikzpicture}
    \end{minipage}
    \begin{minipage}{0.09\textwidth}
    \end{minipage}
    \begin{minipage}{0.37\textwidth}
\centering
    \pgfplotsset{
}
\begin{tikzpicture}
\begin{groupplot}[
    group style={
        group name=my fancy plots,
        group size=1 by 2,
        xticklabels at=edge bottom,
        vertical sep=0pt,
    },
    width=6.5cm,
    xmin=0, xmax=1
]

\nextgroupplot[ymin=3,ymax=7.5,
               ytick={4,5,6},
               xtick={0,0.2,0.4,0.6,0.8},
               grid=both,
               x axis line style= { draw opacity=0 }, 
               axis y line=left,
               axis y discontinuity=parallel,
               height=4.2cm,
               legend style={at={(0.03,1)},anchor=north west},
               xtick style={draw=none}  
                ]
\addplot[no markers, color=red]  file {ElogH.txt};
\addlegendentry{{$\mathbb{E}\big[(X+x)\vert_{\mathbb Z} \big]$}}
\addplot[no markers, color=blue, dashed]  file {ElogH2.txt};
\addlegendentry{{$\text{Var}\big[(X+x)\vert_{\mathbb Z}\big] $}}       

\nextgroupplot[ymin=0,ymax=2.5,
               ytick={1,2},
               xtick={0,0.2,0.4,0.6,0.8},
               axis x line=bottom,
               axis y line*=left,
               xlabel=$x$,
               height=3cm,
               grid=both,
               ]
\addplot[no markers, color=red]  file {ElogH.txt};
\addplot[no markers, color=blue, dashed]  file {ElogH2.txt};                
\end{groupplot}
\end{tikzpicture}
 		    \end{minipage}
    		       \label{figure_H}
    	\begin{minipage}{0.1\textwidth}
    	\end{minipage}
		 \begin{minipage}{0.85\textwidth}
  		\caption{\small The left plot shows an estimate of the density of the random variable $X$ from Theorem \ref{MainLemma}. The right plot shows, as a function of $x \in [0,1]$, the estimated expectation and variance of the random variable $(X+x)\vert_{\mathbb Z}$ defined in Corollary \ref{MainCorollary}. }
  		\end{minipage}
\end{figure}

\noindent However, as already mentioned, extracting more information from this statement requires more detailed knowledge about the moments/the distribution of $X$ that we do not have.
On a positive side, we can provide some preliminary numerical results, see Fig.~\ref{figure_H}. To get these numbers, we have drawn $10^6$ instances of the random variable $-\log_2H_{28}$ as a substitute for the random variable $X = -\log_2H$. To approximate the density we used the \texttt{gaussian\_kde} function of Pythons Scipy package. To estimate first and second moments of $(X+x)\vert_{\mathbb{Z}}$ we used the formulas
\begin{align*}
    \mathbb{E}\big[(X+x)\vert_{\mathbb{Z}}\big]= \sum_{k\ge 1}\Big( P(X \ge k-x-1)-P(X\le -k-x)\Big)
\end{align*}
and
\begin{align*}
    \mathbb{E}\big[\big((X+x)\vert_{\mathbb{Z}}\big)^2+(X+x)\vert_{\mathbb{Z}}\big]= 2\cdot\sum_{k\ge 1} k\Big(P(X \ge k-x-1)-P(X\le -k-x)\Big) ,
\end{align*}
where we again substituted $X$ by $-\log_2H_{28}$. 

\paragraph{Outline}
The  paper is structured as follows. In the next section we give the proof of the main results, which is based on three key lemmas characterising the different phases of \emph{pull}. At first, as long as less than $n^{1/3}$ vertices are informed, \emph{pull} is best described by a branching process, which, suitably normalized, has limiting distribution $H$, see Lemma \ref{Lemma1}. After that, the protocol follows essentially a deterministic recurrence relation as described in Lemma \ref{Lemma2}. Once there are only $o\big(\sqrt{n}\big)$ uninformed vertices remaining the behaviour changes once more, in that all these vertices will be informed in one additional round, see Lemma \ref{Lemma3}.
The proof of Theorem \ref{MainLemma}, based on these lemmas as outlined, is given in Subsection \ref{proofMain}. After that, we give the short proof for Corollary~\ref{MainCorollary} in Subsection \ref{proofMainCor}. The proofs of Lemmas \ref{Lemma1}-\ref{Lemma3} can be found in Subsections \ref{SubSLemma1}-\ref{SubSLemma3}.
 
\section{Proofs}
\subsection{Proof of Theorem \ref{MainLemma}}\label{proofMain}

Our first auxiliary lemma establishes that initially -- as long as there are not too many informed vertices -- the number of informed vertices essentially doubles in each round and the deviation from perfect doubling can be described in terms of a non-trivial random variable.
\begin{lemma}\label{martConver}\label{Lemma1}
There is a continuous and almost surely positive random variable $H$ such that for all $\varepsilon>0$ there are constants $n_0,t_0\in \mathbb{N}$ such that for all $n\ge n_0$ and $t_0\le t\le \log_2(n^{1/3})$ 
$$\sup_{x\in \mathbb R}\big|P\big({2^{-t}|I_t|\ge x}\big)-P(H\ge x)\big|\le \varepsilon.$$
\end{lemma}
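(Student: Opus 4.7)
The plan is to construct $H$ as the $L^2$-martingale limit of $H_t = 2^{-t}J_t$, verify its continuity and positivity, and then transfer the statement to the true sequence $|I_t|$ via a round-by-round Binomial--Poisson coupling. First I would note that $J_t$ is a Galton--Watson branching process with offspring distribution $\xi \stackrel{d}{=} 1 + \text{Po}(1)$, since $\text{Po}(J_t)$ decomposes into the sum of $J_t$ independent unit-mean Poisson variables. Hence $\mathbb{E}[\xi]=2$ and $\text{Var}(\xi)=1$, and the direct computation $\mathbb{E}[J_{t+1}^2\mid J_t]=4J_t^2+J_t$ gives $\mathbb{E}[H_{t+1}^2]=\mathbb{E}[H_t^2]+2^{-t-2}$, so $(H_t)$ is bounded in $L^2$. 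Martingale convergence then produces an almost sure (and $L^2$) limit $H$ with $\mathbb{E}[H]=1$.

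Because $P(\xi=0)=0$ the process never dies out, and an appeal to the Kesten--Stigum theorem (whose hypothesis $\mathbb{E}[\xi\log\xi]<\infty$ holds trivially) gives $H>0$ almost surely. To show that the law of $H$ has no atoms, I would use the functional equation one obtains by conditioning on the offspring of the root:
$$H \stackrel{d}{=} \tfrac{1}{2}\sum_{i=1}^{\xi} H^{(i)}, \qquad \varphi_H(s) = \varphi_H(s/2)\,\exp\bigl(\varphi_H(s/2)-1\bigr),$$
where the $H^{(i)}$ are i.i.d. copies of $H$. The goal is to show $|\varphi_H(s)|$ decays polynomially as $|s|\to\infty$, which by Fourier inversion yields a continuous density; for the lemma itself even the weaker statement that $\varphi_H(s)\to 0$ suffices. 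This decay is what I expect to be the main obstacle: the recursion only contracts $|\varphi_H|$ where $\text{Re}(\varphi_H(s/2))<1$, so one would first argue, using non-degeneracy of $H$, that $|\varphi_H(s)|\le 1-\delta$ on some annulus, and then iterate the functional equation $\Theta(\log|s|)$ times to upgrade this to polynomial decay.

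For the coupling of $|I_t|$ to $J_t$ I would build the processes jointly round by round. Le~Cam's inequality gives $d_{\text{TV}}\bigl(\text{Bin}(n-k,k/n),\,\text{Po}(k(n-k)/n)\bigr)\le k^2/n$, and the Poisson mean perturbation contributes another $k^2/n$, so on the event $\{|I_s|=J_s\le n^{1/3}\}$ one can couple the transitions at step $s$ so that equality persists with probability $1-O(n^{-1/3})$. A union bound over $t\le \log_2(n^{1/3})=O(\log n)$ rounds, combined with a concentration argument ensuring $J_s\le n^{1/3}$ throughout the relevant range, yields $P(|I_t|\neq J_t)=O(n^{-1/3}\log n)=o(1)$.

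Finally, a triangle inequality separates the two sources of error:
$$\sup_{x\in\mathbb{R}}\bigl|P(2^{-t}|I_t|\ge x)-P(H\ge x)\bigr| \;\le\; P(|I_t|\neq J_t) \;+\; \sup_{x\in\mathbb{R}}\bigl|P(H_t\ge x)-P(H\ge x)\bigr|.$$
The first summand is $o(1)$ in $n$ by the coupling, and the second is $o(1)$ in $t$ because $H_t\to H$ in distribution and $H$ has a continuous distribution, which promotes pointwise convergence of distribution functions to uniform Kolmogorov convergence via a standard Pólya-type argument. Choosing $t_0$ large enough to control the second term and then $n_0$ large enough to control the first produces the stated uniform bound.
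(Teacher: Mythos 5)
Your overall architecture matches the paper's: build $H$ as the $L^2$-martingale limit of $H_t=2^{-t}J_t$, establish continuity and positivity, and transfer back to $|I_t|$ via a Binomial--Poisson comparison plus a triangle inequality. The coupling step is essentially the same calculation the paper performs as a total-variation bound $\d(|I_t|,J_t)\le 2\cdot 4^t/n$ obtained by induction with Le Cam's inequality; your per-round bound $O(k^2/n)$ summed over $t\le\log_2 n^{1/3}$ rounds gives the same $o(1)$ conclusion.

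Two genuine points of divergence. For positivity you invoke Kesten--Stigum: since $\xi\stackrel{d}{=}1+\mathrm{Po}(1)$ has $P(\xi=0)=0$ and $\mathbb{E}[\xi\log\xi]<\infty$, the martingale limit is strictly positive almost surely. This is a cleaner route than the paper's, which only obtains $P(H=0)=0$ \emph{after} establishing absolute continuity. Your shortcut is correct and decouples the two properties.

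For continuity, you correctly identify the decay of the characteristic function as the main obstacle, but your sketch has a gap the paper avoids. You propose to iterate the fixed-point equation $\varphi_H(s)=\varphi_H(s/2)\exp(\varphi_H(s/2)-1)$ starting from a bound $|\varphi_H|\le 1-\delta$ on an annulus, which you justify by ``non-degeneracy of $H$.'' Non-degeneracy only gives $|\varphi_H(s)|<1$ off a discrete set; to get a \emph{uniform} bound on an annulus you need $H$ to be non-lattice, i.e., $|\varphi_H(s)|<1$ for every $s\ne 0$. Each $H_t$ is supported on $2^{-t}\mathbb{Z}$, so this is not free and must be argued separately (it can be extracted from the same functional equation, since $|z e^{z-1}|=1$ with $|z|\le 1$ forces $z=1$, but that argument is not in your write-up). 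The paper sidesteps this by iterating the \emph{finite-$t$} recursion $\varphi_{t+1}(s)=h(\varphi_t(s/2))$ from the explicit seed $\varphi_0(x)=e^{ix}$ and controlling the first $j^\star$ iterations with a second-order expansion (its Lemma on $F^{(j)}$), so that contraction is established for $\varphi_t$ uniformly in $t$ and passed to $\varphi$ in the limit. Your route can be made to work, but the non-latticeness step and the quantitative iteration would need to be carried out; as written they are acknowledged placeholders rather than proofs. Note also that your remark that $\varphi_H(s)\to 0$ already suffices for the lemma's ``continuous'' is correct (via Wiener's theorem it rules out atoms), but the paper proves the stronger integrability of $\varphi_H$ and hence absolute continuity, which it uses when speaking of the density of $X=-\log_2 H$.
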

The proof is in Section~\ref{SubSLemma1}. From now on we fix  some $\varepsilon>0$ and set for the remainder
$$t_1 := \lfloor \log_2 (n^{1/3})\rfloor,$$
where $n \ge n_0$ is given by the previous lemma.
For rounds $t \ge t_1$ it turns out that the behaviour of \Pull can be best described by a (deterministic) recurrence relation. 
We have roughly $2^{t_1} \approx n^{1/3}$ informed vertices, enough so that it is reasonable to assume that the number of newly informed vertices in the following rounds is strongly concentrated around its expectation. 

Denote by $U_t$ the set of uniformed vertices at the start of round $t$.
Then it is easy to see that $\mathbb{E}[|U_{t+1}|\bigm\vert  I_t]=(|U_t|/n)^2n$, see also Lem.~\ref{expprec} below, and thus we expect $|U_{t_1+t}|$, given $|U_{t_1}|$, to be close to  $(|U_{t_1}|/n)^{2^t}n$.
The only thing that we have to take care of is that (small) deviations from the expectation are not  blown out of proportions when considering multiple rounds.
The next lemma, that we prove in Subsection \ref{SubSLemma2}, does exactly that.
With high probability, or abbreviated as whp, means with probability tending to 1 as $n$ tends to infinity.
\begin{lemma} \label{Lemma2}
With high probability 
$$\bigcap_{~t\ge 0} \left \{\Big ||U_{t_1+t}|-\big (|U_{t_1}|/n\big )^{2^t}n\Big |\le |U_{t_1+t}| \cdot n^{-1/50}+n^{1/4}\right \}.$$
\end{lemma}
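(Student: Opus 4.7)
The plan is to prove Lemma~\ref{Lemma2} by combining a standard per-round concentration estimate with a careful analysis of how the deviation from the deterministic sequence $b_t := (|U_{t_1}|/n)^{2^t} n$ propagates. Conditional on $|U_{t_1+t}|$, the next count $|U_{t_1+t+1}|$ is $\mathrm{Bin}(|U_{t_1+t}|,|U_{t_1+t}|/n)$ with mean $|U_{t_1+t}|^2/n$ and variance $|U_{t_1+t}|^2(n-|U_{t_1+t}|)/n^2$; note that only $O(\log n)$ rounds need to be controlled, since the squaring recursion drives $b_t$ from $\Theta(n)$ down to $o(1)$ within that many steps.

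First, I would apply a Bernstein-type inequality to obtain that, for each round, with probability at least $1-n^{-10}$,
$$
\big| |U_{t_1+t+1}| - |U_{t_1+t}|^2/n \big| \;\le\; C\sqrt{\frac{|U_{t_1+t}|^2(n-|U_{t_1+t}|)}{n^2}\,\log n}\;+\;C\log n,
$$
and take a union bound over the $O(\log n)$ relevant rounds, obtaining an event $\mathcal{E}$ of probability $1-o(1)$ on which every round satisfies this estimate. Writing $D_t := |U_{t_1+t}|-b_t$ and using $b_{t+1}=b_t^2/n$, on $\mathcal{E}$ the deviation satisfies $D_0=0$ and
$$
D_{t+1} \;=\; \frac{|U_{t_1+t}|+b_t}{n}\,D_t \;+\; E_{t+1},
$$
with $E_{t+1}$ controlled as above. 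Solving this linear recursion yields
$$
|D_t| \;\le\; \sum_{s=1}^{t} |E_s|\prod_{r=s}^{t-1}\frac{|U_{t_1+r}|+b_r}{n},
$$
and the product simplifies via $\prod_{r=s}^{t-1}b_r/n = b_t/b_s$, so that as long as the inductive bound keeps $|U_{t_1+r}|$ within a constant factor of $b_r$, the amplification from round $s$ to $t$ is at most of order $2^{t-s}\cdot b_t/b_s$.

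Two regimes then determine the size of $|D_t|$. In the \emph{early} phase, $b_t$ is still close to $n$ and $n-b_t \approx 2^t\cdot n^{1/3}$, so the Chernoff noise in round $s$ has order $\sqrt{(n-b_s)\log n}=O(2^{s/2}\, n^{1/6}\sqrt{\log n})$; feeding this into the explicit solution gives $|D_t| = O(n^{1/6}\sqrt{\log n}\cdot 2^t)$, which is much smaller than $|U_{t_1+t}|\, n^{-1/50}\approx n^{49/50}$ throughout this phase, since it only lasts $t \le \tfrac{2}{3}\log_2 n$ rounds. In the \emph{late} phase, $b_t$ has dropped well below $n$ and the multiplier $(|U|+b)/n$ is bounded away from $1$ and quickly tends to $0$, so errors no longer accumulate; here the additive $n^{1/4}$ term on the right-hand side of the lemma absorbs both the residual noise and the fluctuations in the small-mean regime where Chernoff must be replaced by a Poisson tail bound. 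The principal obstacle I anticipate is pasting the two regimes together uniformly in $t$, in particular checking that the multiplicative $n^{-1/50}$ bound survives the short middle phase during which $b_t$ passes through $\Theta(n)$ down to $o(n)$; working in logarithmic coordinates $\ell_t = \ln(|U_{t_1+t}|/n)$, where the deterministic recursion linearizes to $\ell_{t+1}=2\ell_t$, could simplify this bookkeeping and make the $n^{-1/50}$ factor more transparent.
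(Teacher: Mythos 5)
Your proposal follows essentially the same route as the paper: a per\mbox{-}round concentration bound for the binomial increment, a near\mbox{-}linear recursion for the deviation $D_t = |U_{t_1+t}| - b_t$ (the paper's $\Delta_t$), and a phase\mbox{-}split accumulation argument over the $O(\log n)$ relevant rounds. The paper uses a Chebyshev\mbox{-}type bound built on a conditional variance estimate rather than Bernstein, and phrases the accumulation step as a two\mbox{-}claim induction rather than via the closed\mbox{-}form solution of the linear recursion; those choices are interchangeable. The difficulty you correctly flag---pasting the two regimes together uniformly in $t$---is exactly where the paper puts its effort, choosing the cutoff $t_2 = \lfloor(14/15)\log_2 n\rfloor$ so that during the first claim $|U_t|/n$ can be bounded crudely by $1$, and tracking the remaining rounds via $\Delta_{t_2+s}\le |U_{t_2+s}|(2+\delta)^s n^{-1/11}$ with $\delta=1/100$.

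There is one substantive misstatement that you would need to repair. In the late phase you argue that the multiplier $(|U|+b)/n$ is bounded away from $1$ and tends to $0$, so ``errors no longer accumulate'' and the additive $n^{1/4}$ absorbs the rest. That is true of the \emph{absolute} error, but the target bound $|U_{t_1+t}|n^{-1/50}$ is \emph{relative} to $|U_{t_1+t}|$, and the relative error $D_t/|U_t|$ keeps growing in the late phase, roughly doubling each round: the deterministic shrinkage $b_{t+1}/b_t=b_t/n$ is about half the multiplier $\approx 2b_t/n$. The additive $n^{1/4}$ term does not rescue this---it is only useful once $|U_t|\lesssim n^{1/4+1/50}$. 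The reason the argument still closes is purely quantitative: after $t_2$ only about $(1/15)\log_2 n + O(\log\log n)$ rounds remain before $|U_t|$ falls below $n^{1/4}$, and over that range $(2+\delta)^s$ gains at most $n^{1/15+o(1)}$, which is dominated by the slack $n^{1/11-1/50}$. You would need an explicit bound of this kind in place of the qualitative ``errors no longer accumulate,'' exactly as in the paper's Claim~2.
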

This lemma will enable us to track the process all the way until there are fewer than $\sqrt{n}$ uniformed vertices remaining.
Indeed, as we will argue shortly, $\sqrt{n}$ is an important threshold in the following sense. On the one hand, if there are substantially more than $\sqrt{n}$ uninformed vertices, very likely the process will not terminate in the next round.
In contrast, if there are much less than $\sqrt{n}$ uninformed vertices, the process will likely terminate in the next round.
Furthermore, we will see that it is very unlikely that $|U_t|=\Theta\big(\sqrt{n}\big)$ for some $t$, where the process terminates only with constant probability.
The next lemma summarizes our findings.
\begin{lemma}\label{Lemma3}
Let $T=\min \big\{t\in \mathbb{N}: |U_t|<\sqrt n\big\}$, then with high probability 
$$|U_{T-1}|=\omega\big(\sqrt{n}\big), \quad |U_T|=o\big(\sqrt{n}\big), \quad |U_{T}|>0 \quad \text{and}\quad|U_{T+1}|=0.$$
\end{lemma}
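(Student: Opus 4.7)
The plan is to combine the deterministic approximation of Lemma~\ref{Lemma2} with the martingale-limit characterization of Lemma~\ref{Lemma1} to localize $T$ precisely. Iterating Lemma~\ref{Lemma2} and Taylor-expanding $\ln(1-|I_{t_1}|/n)$ (where $|I_{t_1}|/n = O(n^{-2/3})$ whp by Lemma~\ref{Lemma1}), one obtains, whp,
\begin{equation*}
|U_{t_1+t}| \;=\; n\exp\!\left(-2^{t_1+t}\cdot 2^{-t_1}|I_{t_1}|/n\right)(1+o(1)).
\end{equation*}
By Lemma~\ref{Lemma1}, the prefactor $2^{-t_1}|I_{t_1}|$ is close in distribution to the continuous positive random variable $H$. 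The continuous interpolation thus hits $\sqrt n$ at the real-valued random time $\tau^\star:=\log_2 n+\log_2\ln n-1-\log_2 H$, so writing $\alpha:=\{\tau^\star\}$, one has $T=\lceil\tau^\star\rceil$ whp and
\begin{equation*}
|U_{T-1}| \;=\; n^{1-2^{-\alpha-1}}(1+o(1)), \qquad |U_T| \;=\; n^{1-2^{-\alpha}}(1+o(1)).
\end{equation*}

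The first two claims in the lemma then reduce to showing that $\alpha$ is bounded away from $0$ and $1$ with probability tending to $1$: indeed, if $\alpha\in[\delta_n,1-\delta_n]$ with $\delta_n\to 0$ and $\delta_n\log n\to\infty$, then $|U_{T-1}|/\sqrt n = n^{\Omega(\delta_n)}\to\infty$ and $|U_T|/\sqrt n = n^{-\Omega(\delta_n)}\to 0$. Such a bound on $\alpha$ would follow from Lemma~\ref{Lemma1} combined with the continuity of $H$: one replaces $\alpha$ by $\{\log_2 n+\log_2\ln n-1-\log_2 H\}$ and uses that the CDF of $\log_2 H$ is uniformly continuous on $\R$ (being continuous and tight). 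By tightness only $O(1)$ integer shifts $k$ contribute nontrivially, yielding $P(\alpha\le\delta_n)+P(\alpha\ge 1-\delta_n)\to 0$ uniformly in the phase of $L_n:=\log_2 n+\log_2\ln n$ modulo $1$, for instance with $\delta_n=1/\log\log n$.

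The last two claims follow by elementary concentration given the previous control. Conditionally on $|U_{T-1}|=m$, $|U_T|$ is $\mathrm{Bin}(m,(m-1)/(n-1))$, so
$P(|U_T|=0\mid|U_{T-1}|=m)\le\exp(-m^2/(2n))=o(1)$ when $m=\omega(\sqrt n)$; conditionally on $|U_T|=k$, a union bound over the $k$ uninformed vertices gives
$P(|U_{T+1}|>0\mid|U_T|=k)\le k^2/n=o(1)$ when $k=o(\sqrt n)$. The main obstacle will be the generic-position argument from the previous paragraph: turning the mere continuity of $H$ into a decay $P(\{\tau^\star\}\le\delta_n)\to 0$ at the scale $\delta_n=1/\log\log n$, uniformly over $n$ (whose phase $L_n\bmod 1$ varies), needs a careful uniform-continuity argument combined with the tightness of $\log_2 H$.
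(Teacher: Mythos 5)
Your proposal follows essentially the same route as the paper: use the deterministic iterate from Lemma~\ref{Lemma2} to write $|U_{t_1+t}|\approx n\exp(-2^{t-t_1}|I_{t_1}|/n)$, localize the crossing time $T$ near $\log_2 n+\log_2\ln n-1-\log_2(2^{-t_1}|I_{t_1}|)$, and then argue that the fractional part of this crossing time avoids a shrinking neighbourhood of $\{0,1\}$ because $\log_2 H$ is a continuous, tight random variable — after which Markov/Chernoff bounds on the final one or two rounds finish the proof. The paper packages the same idea into two sub-lemmas (showing that whp $|U_t|$ never lands in $[\sqrt n/\ln n,\sqrt n\ln n]$, and that outside this band the termination behaviour is deterministic); the "generic-position" step you flag as the main obstacle is exactly the paper's Event~2 in the proof of Lemma~\ref{Lemma3.2}, which it dispatches with the same union-over-$O(1)$-integer-shifts plus continuity-of-$\log_2 H$ argument you sketch, so your acknowledged gap is filled in the expected way.
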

The last statement implies immediately that whp $X_n = T + 1$ and it thus provides a handy way to compute $X_n$. To that end we utilize Lemma \ref{Lemma2} that guarantees  whp for all  $t\in \mathbb{N}$ 
\begin{align*}
|U_{t_1+t}|=\big(1+o(1)\big)\big(|U_{t_1}|/n\big)^{2^t}n+O \big(n^{1/4}\big).
\end{align*}
Together with Lemma \ref{Lemma3} this implies whp
$$\big(|U_{t_1}|/n\big)^{2^{T-t_1-1}}n=\omega\big(\sqrt{n}\big)\quad \text{and}\quad \big(|U_{t_1}|/n\big)^{2^{T-t_1}}n=o\big(\sqrt{n}\big)$$
and therefore also whp 
$$
    T
    = \min\Big\{t \in \mathbb{N}: \big(|U_{t_1}|/n\big)^{2^{t-t_1}}n < \sqrt{n}\Big\}
    = \min\Big\{t\in \mathbb{N}:(1-|I_{t_1}|/n)^{2^{t-t_1}}n< \sqrt{n}\Big\}.
$$ 
Let $T'$ be the real number such that $\big(1-|I_{t_1}|/n\big)^{2^{T'}}n= \sqrt{n}$. Then $T=\big\lfloor T'+t_1+1\big\rfloor$
and it is straightforward to verify that
\begin{align*}
    T' = \log_2n-\log_2|I_{t_1}|+\log_2 \ln n-1+o(1).
\end{align*}
Therefore, as Lemma \ref{Lemma3} yields whp $X_n=T+1= \lfloor T'+t_1+2\big\rfloor$,
\begin{align*}
\sup_{k\in \mathbb{N}}
    \left|P(X_n\ge k)-P\Big(\big\lfloor\log_2 n +\log_2\ln n -\log_2 \big(2^{-t_1}|I_{t_1}|\big) + 1+o(1)\big\rfloor\ge k\Big)\right| = o(1).
\end{align*}
In Lemma \ref{Lemma1} we showed that $2^{-t_1}|I_{t_1}|$ converges in distribution to a random variable $H$; since $H$ is continuous so is $X = -\log_2 H$ and the claim in Theorem \ref{MainLemma} follows readily. 
\subsection{Proof of Corollary \ref{MainCorollary}}\label{proofMainCor}
 Theorem \ref{MainLemma} states that there is a continuous random variable $X$ such that for all $x\in [0,1)$ and strictly increasing sequences $n_i$ such that $\log_2n_i+\log_2\ln n_i-\lfloor \log_2n_i+\log_2\ln n_i \rfloor \to x$
\begin{align*}
\sup_{k\in \mathbb{N}}
    \Big|P\big(X_{n_i}\ge k\big)-P\big(\log_2 n_i +\log_2\ln n_i +X + 1\ge k\big)\Big| = o(1).
\end{align*}
Setting $\{y\}=y-\lfloor y\rfloor$ for all $ y\in \mathbb{R}$ and substituting $k=\lfloor \log_2n_i+\log_2\ln n_i\rfloor +t+1$ we obtain
\begin{align*}
\sup_{t\in \mathbb{Z}}
    \Big|P\big(X_{n_i}\ge \lfloor \log_2n_i+\log_2\ln n_i\rfloor +t+1\big)-P\big(\{\log_2 n_i+\log_2\ln n_i\} +X \ge t \big)\Big| = o(1).
\end{align*}
Thus $X$ being a continuous random variable 
\begin{align*}
\sup_{t\in \mathbb{Z}}
    \big|P\big(X_{n_i}\ge \lfloor \log_2n_i+\log_2\ln n_i\rfloor +1+t\big)-P(x+X\ge t)\big|
= o(1).
\end{align*}
Thus
$P(X_{n_i}-\lfloor \log_2n_i+\log_2\ln n_i\rfloor\le  t)\overset{i\to\infty}{\longrightarrow} P( X\le t-x)$,
as claimed.

\subsection{Proof of Lemma \ref{Lemma1}}\label{SubSLemma1}
To prove Lemma \ref{Lemma1} first recall that
\begin{align*}
J_0=1\quad\text{ and }\quad J_{t+1}=J_t+\text{Po}(J_t),~H_t=2^{-t}J_t,\ t\in \mathbb N_0.
\end{align*}
We show three claims in order to prove Lemma \ref{Lemma1}, namely that $|I_t|$ is close to $J_t$, then that $(H_t)_{t \in \mathbb{N}_0}$ is a martingale that converges (to $H$) and finally that the limit is absolutely continuous.  

\paragraph{$|I_t|$ and $J_t$ are close.}
We begin with a simple lemma that determines the first and second moment of $J_t$.
\begin{lemma}\label{expJ_t}
For all $t\in \mathbb{N}_0$ and $J_t$ as defined above
\begin{align*}
\mathbb{E}[J_t]=2^t\quad\text{and}\quad \mathbb{E}[J_t^2]=2^{t-1}(3\cdot 2^t-1).
\end{align*}
\end{lemma}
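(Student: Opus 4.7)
The plan is a straightforward induction on $t$, using the defining recursion $J_{t+1} = J_t + \mathrm{Po}(J_t)$ together with the standard facts that a $\mathrm{Po}(\lambda)$ random variable has mean $\lambda$ and second moment $\lambda + \lambda^2$.

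For the first moment I would take conditional expectation, getting $\mathbb{E}[J_{t+1} \mid J_t] = J_t + J_t = 2J_t$, so $\mathbb{E}[J_{t+1}] = 2\mathbb{E}[J_t]$, and the base case $\mathbb{E}[J_0] = 1$ yields $\mathbb{E}[J_t] = 2^t$ by induction. This immediately also shows that $H_t = 2^{-t} J_t$ is a martingale in mean, a fact that will be reused in the next subsection.

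For the second moment, I would expand
\begin{align*}
    \mathbb{E}[J_{t+1}^2 \mid J_t]
    &= \mathbb{E}\bigl[J_t^2 + 2 J_t\, \mathrm{Po}(J_t) + \mathrm{Po}(J_t)^2 \bigm\vert J_t\bigr] \\
    &= J_t^2 + 2 J_t^2 + (J_t + J_t^2) = 4 J_t^2 + J_t.
\end{align*}
Taking expectations and using the first-moment identity gives the clean linear recursion
\begin{align*}
    \mathbb{E}[J_{t+1}^2] = 4\, \mathbb{E}[J_t^2] + 2^t.
\end{align*}
With base case $\mathbb{E}[J_0^2] = 1 = 2^{-1}(3\cdot 2^0 - 1)$, one checks by induction that the ansatz $\mathbb{E}[J_t^2] = 2^{t-1}(3\cdot 2^t - 1)$ is preserved: plugging in yields $4 \cdot 2^{t-1}(3\cdot 2^t - 1) + 2^t = 2^{t+1}(3\cdot 2^t - 1) + 2^t = 2^t(3\cdot 2^{t+1} - 1)$, which is exactly $2^{(t+1)-1}(3\cdot 2^{t+1} - 1)$.

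There is no real obstacle here; the only thing worth being careful about is correctly using the second moment of the Poisson (not just its variance) when expanding $J_{t+1}^2$. Alternatively, one could solve the recursion $a_{t+1} = 4 a_t + 2^t$ explicitly by the standard ansatz $a_t = \alpha \cdot 4^t + \beta \cdot 2^t$, which gives $\alpha = 3/2$, $\beta = -1/2$, matching the claimed formula; induction is cleaner to write down but the closed-form derivation may be more transparent to the reader.
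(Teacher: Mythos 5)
Your proof is correct and follows essentially the same route as the paper: induction on $t$, using the tower property together with $\mathbb{E}[\mathrm{Po}(\lambda)]=\lambda$ and $\mathbb{E}[\mathrm{Po}(\lambda)^2]=\lambda+\lambda^2$ to derive $\mathbb{E}[J_{t+1}]=2\mathbb{E}[J_t]$ and $\mathbb{E}[J_{t+1}^2]=4\mathbb{E}[J_t^2]+\mathbb{E}[J_t]$. The remark about solving the linear recursion in closed form is a minor cosmetic variant, not a different argument.
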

\begin{proof}
We compute both moments inductively, starting with the base case
\begin{align*}
\mathbb{E}[J_0]=1\quad \text{and}\quad \mathbb{E}[J_0^2]=1.
\end{align*}
Moreover, using the tower property of the expectation and that $\mathbb{E}[\text{Po}(\lambda)]=\lambda$ for any $\lambda>0$ we obtain by induction
\begin{align*}
\mathbb{E}[J_{t+1}]=\mathbb{E}[J_t+\text{Po}(J_t)]=\mathbb{E}\Big[\mathbb{E}\big[J_t+\text{Po}(J_t)\bigm\vert  J_t\big]\Big]=2\cdot \mathbb{E}[J_t]=2^{t+1}.
\end{align*}
We compute the second moment similarly. Since $\mathbb{E}[\text{Po}(\lambda)^2]=\lambda+\lambda^2$ we obtain that
\begin{align*}
\mathbb{E}[J_{t+1}^2]&=\mathbb{E}\left[\big(J_t+\text{Po}(J_t)\big)^2\right] = \mathbb{E}\left[J_t^2+2\cdot J_t\text{Po}(J_t)+\text{Po}(J_t)^2\right]= 4\cdot \mathbb{E}[J_t^2]+ \mathbb{E}[J_t]\\
&=4\cdot 2^{t-1}(3\cdot 2^t-1) +2^{t}= 2^{t}(3\cdot 2^{t+1}-1).
\end{align*}
\end{proof}
The next (well-known) statement bounds the distance between two Poisson distributed random variables and furthermore quantifies the distance in the Poisson limit theorem.  
Recall that the \emph{total variation distance} for two integer valued random variables~$X,Y$ can be defined as
\begin{align}\label{eq:totalVarDist}
\d(X,Y):=\frac 12\sum_{k\in \mathbb Z}\big|P(X= k)-P(Y= k)\big|.
\end{align}
\begin{lemma}[\cite{Serfling1978}, Eq.~3.6 and Thm.~4.1]
\label{LeCam}\
\begin{itemize}
\item[a)]Let $\lambda_1,\lambda_2\in\mathbb{N}$ and $X\sim \textnormal{Po}(\lambda_1)$ and $Y\sim \textnormal{Po}(\lambda_2)$ be independent Poisson-distributed random variables. Then
$\d(X,Y)\le |\lambda_1-\lambda_2|$.
\item[b)] Let $X\sim \textnormal{Bin}(n,p)$ and $Y\sim \textnormal{Po}(np).$ Then $\d(X,Y) \le n p^2$.
\end{itemize}
\end{lemma}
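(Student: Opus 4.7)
For part (a), my plan is a short coupling argument. Without loss of generality take $\lambda_1 \le \lambda_2$, and use the convolution property of Poisson to realize $Y\sim\textnormal{Po}(\lambda_2)$ as $Y=X+Z$ with $X\sim\textnormal{Po}(\lambda_1)$ and $Z\sim\textnormal{Po}(\lambda_2-\lambda_1)$ independent. The coupling inequality then gives
\[
\d(X,Y) \le P(X\ne Y) = P(Z\ne 0) = 1 - e^{-(\lambda_2-\lambda_1)} \le \lambda_2-\lambda_1,
\]
where the final step uses only $1-e^{-x}\le x$ for $x\ge 0$. The integrality hypothesis is not actually needed.

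For part (b), I would reduce to a single-summand estimate. Write $X=B_1+\cdots+B_n$ with independent $B_i\sim\textnormal{Ber}(p)$ and $Y=P_1+\cdots+P_n$ with independent $P_i\sim\textnormal{Po}(p)$; that $Y\sim\textnormal{Po}(np)$ follows again from Poisson convolution. The key tool is subadditivity of $\d$ under independent sums: choosing for each $i$ an optimal coupling $(B_i,P_i)$ attaining $\d(B_i,P_i)$ and making these couplings mutually independent across $i$ yields a joint coupling of $(X,Y)$ under which
\[
\d(X,Y) \le P(X\ne Y) \le \sum_{i=1}^n P(B_i\ne P_i) = n\,\d(\textnormal{Ber}(p),\textnormal{Po}(p)),
\]
so it suffices to show $\d(\textnormal{Ber}(p),\textnormal{Po}(p))\le p^2$.

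This last step is a short direct computation from the definition \eqref{eq:totalVarDist}: the only disagreement masses sit at $0$, at $1$, and at $\{2,3,\dots\}$, contributing $|1-p-e^{-p}|$, $|p-pe^{-p}|$, and $1-(1+p)e^{-p}$, respectively. Using $e^{-p}\ge 1-p$ to resolve the first absolute value and $e^{-p}\le 1$ for the second, the three terms sum to $2p(1-e^{-p})$ after cancellation, which upon dividing by two and applying $1-e^{-x}\le x$ once more yields $\d(\textnormal{Ber}(p),\textnormal{Po}(p)) = p(1-e^{-p})\le p^2$.

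The only point worth double-checking is the subadditivity step in part (b): one must verify that declaring the per-coordinate optimal couplings to be mutually independent really preserves the required product structure of the $B_i$'s and of the $P_i$'s; this is immediate but I would spell it out once rather than cite subadditivity as a black box. Otherwise each ingredient is short, standard, and elementary, and I do not anticipate a genuine obstacle.
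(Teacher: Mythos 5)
The paper does not actually prove this lemma: it is imported verbatim from Serfling (1978) by citation, so there is no in-paper argument for you to be compared against. Your proof is a correct, self-contained replacement for that citation, and it is the classical Le Cam coupling argument rather than whatever machinery Serfling uses. Part (a) is right: since $\d$ as defined in \eqref{eq:totalVarDist} depends only on the marginal laws, you are free to re-realize the pair inside a coupling, and the decomposition $Y=X+Z$ with $Z\sim\textnormal{Po}(\lambda_2-\lambda_1)$ together with $\d(X,Y)\le P(X\neq Y)=1-e^{-(\lambda_2-\lambda_1)}\le \lambda_2-\lambda_1$ is exactly the standard argument; you are also correct that the integrality assumption $\lambda_1,\lambda_2\in\mathbb N$ is inessential (the paper presumably inherits it from how the lemma gets applied, to $\ell$ and $(n-\ell)\ell/n$, but your bound is the general one). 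Part (b) is also right: the tensorization step goes through because for discrete laws a maximal coupling $(\hat B_i,\hat P_i)$ attaining $\d(\textnormal{Ber}(p),\textnormal{Po}(p))$ exists, and taking these couplings independent across $i$ preserves the product structure, so $\sum_i\hat B_i\sim\textnormal{Bin}(n,p)$, $\sum_i\hat P_i\sim\textnormal{Po}(np)$, and a union bound on $\{\hat B_i\neq\hat P_i\}$ gives $\d(X,Y)\le n\,\d(\textnormal{Ber}(p),\textnormal{Po}(p))$. Your one-coordinate computation
\[
2\,\d\big(\textnormal{Ber}(p),\textnormal{Po}(p)\big)=\big(e^{-p}-1+p\big)+p\big(1-e^{-p}\big)+\big(1-(1+p)e^{-p}\big)=2p\big(1-e^{-p}\big)
\]
is correct, and $p(1-e^{-p})\le p^2$ finishes it. In short: no gap, different (and more elementary) route than the cited source, and it is worth spelling out the maximal-coupling-plus-product-structure step as you flagged, since citing ``subadditivity of TV'' as a black box hides exactly the point where independence is used.
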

With these ingredients at hand we can give a bound on the distance of $|I_t|$ and $J_t$ that is quite strong as long as $t$ is not too large.
\begin{lemma}\label{poisApp}
For all $t\in \mathbb{N}$ 
\begin{align*}
    \d_t := \d\big(|I_t|,J_t\big)\le 2\cdot 4^{t}/n.
\end{align*}
\end{lemma}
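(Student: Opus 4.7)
The plan is to induct on $t$ using the triangle inequality for total variation distance. The base case $t=0$ is immediate since $|I_0|=J_0=1$ deterministically, so $d_0=0$. For the inductive step I would insert the hybrid random variable $|I_t|+\text{Po}(|I_t|)$ between $|I_{t+1}|$ and $J_{t+1}$, giving
\begin{equation*}
d_{t+1} \le d\big(|I_{t+1}|,\, |I_t|+\text{Po}(|I_t|)\big) \;+\; d\big(|I_t|+\text{Po}(|I_t|),\, J_t+\text{Po}(J_t)\big).
\end{equation*}
The second summand is at most $d_t$ via a standard ``add independent noise preserves TV'' coupling: take an optimal coupling of $|I_t|$ and $J_t$ so that $P(|I_t|\ne J_t)=d_t$, then use a single realisation of the Poisson noise whenever the two coincide.

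For the first summand I would condition on $|I_t|=a$, reducing the question to comparing $\text{Bin}(n-a,a/n)$ with $\text{Po}(a)$. Interposing $\text{Po}((n-a)a/n)$ and invoking Lemma~\ref{LeCam}(b) and then (a) yields
\begin{equation*}
d\big(\text{Bin}(n-a,a/n),\,\text{Po}(a)\big) \;\le\; (n-a)(a/n)^2 + a^2/n \;\le\; 2a^2/n.
\end{equation*}
Averaging over $|I_t|$ gives $d\big(|I_{t+1}|,\,|I_t|+\text{Po}(|I_t|)\big)\le 2\mathbb{E}[|I_t|^2]/n$, so the recursion becomes $d_{t+1} \le d_t + (2/n)\mathbb{E}[|I_t|^2]$.

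To close the argument I need an $O(4^t)$ bound on $\mathbb{E}[|I_t|^2]$. Expanding $|I_{t+1}|=|I_t|+\text{Bin}(n-|I_t|,|I_t|/n)$ conditionally on $I_t$ and using $\mathbb{E}[\text{Bin}(m,p)^2]=mp(1-p)+(mp)^2$ gives
\begin{equation*}
\mathbb{E}\big[|I_{t+1}|^2\bigm\vert I_t\big] \;=\; |I_t|^2\big(2-|I_t|/n\big)^2 + |I_t|\big(1-|I_t|/n\big)^2 \;\le\; 4|I_t|^2 + |I_t|,
\end{equation*}
and combined with $\mathbb{E}[|I_t|]\le 2^t$ (which follows from $\mathbb{E}[|I_{t+1}|\mid I_t]\le 2|I_t|$), a short induction yields $\mathbb{E}[|I_t|^2]\le \frac{3}{2}\cdot 4^t$. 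Plugging this in and summing the resulting geometric series gives $d_t \le (4^t-1)/n \le 2\cdot 4^t/n$, with a bit of constant to spare. The only mildly subtle point—and the one I would take care to spell out—is the independent-noise coupling argument used to pass from $d(|I_t|,J_t)$ to $d(|I_t|+\text{Po}(|I_t|),J_t+\text{Po}(J_t))$; the rest is bookkeeping and the routine computation of the conditional second moment.
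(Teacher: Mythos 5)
Your proof is correct, and it takes a genuinely different route from the paper's. Both arguments induct on $t$ and reduce the inductive step to a binomial--Poisson comparison via the triangle inequality of Lemma~\ref{LeCam}, but you interpose a different hybrid: the paper effectively compares $|I_{t+1}|$ with $J_t + \text{Bin}(n-J_t,J_t/n)$ and then with $J_{t+1}$, so the binomial-vs-Poisson distance is weighted by the law of $J_t$ and the paper invokes $\mathbb{E}[J_t^2]$ from Lemma~\ref{expJ_t}; you instead interpose $|I_t|+\text{Po}(|I_t|)$, so the binomial-vs-Poisson error is weighted by the law of $|I_t|$ and you must supply the bound $\mathbb{E}\big[|I_t|^2\big]\le \tfrac{3}{2}\cdot 4^t$ yourself, which you do correctly from the recursion $\mathbb{E}[|I_{t+1}|^2\mid I_t]\le 4|I_t|^2+|I_t|$ and $\mathbb{E}[|I_t|]\le 2^t$. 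Your coupling step---freezing the Poisson noise on the event $\{|I_t|=J_t\}$ of an optimal coupling---is the standard data-processing/contraction argument for total variation under a Markov kernel and is sound (it is the same fact the paper derives by hand in its \emph{auxiliary calculation}). A concrete bonus of your ordering is that it avoids the tail-correction term $P(J_{t+1}>n)$, which the paper needs because its hybrid applies the binomial kernel at values of $J_t$ that may exceed $n$; and your telescoping gives the slightly cleaner bound $\d_t\le (4^t-1)/n$ rather than the paper's step-by-step inductive $2\cdot 4^t/n$. One point worth making explicit if you write this up: Lemma~\ref{LeCam}(a) as stated in the paper assumes integer parameters, yet both you and the paper apply it to the non-integer parameter $(n-a)a/n$; this is harmless because the Serfling bound actually holds for arbitrary nonnegative reals, but it deserves a remark.
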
\begin{proof}
Note that it suffices to consider only the case $t\le \log_4 n$, as otherwise the claimed bound is greater than one and consequently trivially true.
There are $|I_t|$ informed vertices in round $t$. Then the probability of any vertex $v\in U_t$ to be informed in that round is $|I_t|/n$ and furthermore it is independent of all other uninformed vertices, that is, the number of newly informed vertices is binomially distributed with $|U_t|$ tries and success probability $|I_t|/n$.
Thus, in distribution, 
\begin{align}\label{eq:pullBinom}
|I_{t+1}|=|I_t|+\text{Bin}\big(|U_t|,|I_t|/n\big) ,
\end{align}
an equation that we have already encountered in the introduction. 
We prove the statement of the lemma by induction over $t$. The base case is obvious as $|I_0|=1=J_0$. For the induction step, we use \eqref{eq:totalVarDist} together with $|I_t|$ and $J_t$ only taking values on the positive integers, to get 
\begin{align*}
    & 2\cdot \d_{t+1}
        = \sum_{k\ge 1}\big|P\big(|I_{t+1}|= k\big)-P(J_{t+1}= k)\big|\\
    = &\sum_{k=1}^n
        \left |
            \sum_{\ell = 1}^k P\big(|I_{t+1}|= k\bigm\vert     |I_t|=\ell\big)P\big(|I_t|=\ell\big)-P\big(J_{t+1}= k\bigm\vert J_t=\ell\big)P(J_t=\ell)
        \right |+P(J_{t+1}>n)
        .
\end{align*}
To simplify this expression we consider the auxiliary calculation
\begin{align*}
    & ~ \sum_{k= 1}^n\Big|\sum_{\ell = 1}^k P\big(|I_{t+1}|= k\bigm\vert  |I_t|=\ell\big)\big (P\big(|I_t|=\ell\big)-P(J_t=\ell)\big)\Big|\\
    \le & ~ \sum_{\ell=1}^n
    \Big|
        P\big(|I_t|=\ell\big)-P(J_t=\ell)
    \Big| \sum_{k = \ell}^nP\big(|I_{t+1}|= k\bigm\vert  |I_t|=\ell\big)\\
   \le & ~ \sum_{\ell= 1}^n\Big| P\big(|I_t|= \ell\big)-P(J_t= \ell)\Big|\le 2\cdot \d_t.
\end{align*}
In order to obtain a bound for the tail probability of $J_{t+1}$ we use Lemma \ref{expJ_t} as well as the assumption $t\le \log_4 n$ so that by Chebyshev's inequality and plenty of room to spare 
\begin{align*}
    P(J_{t+1}>n)\le P\big(|J_{t+1}-\mathbb E[J_{t+1}]|>n-\mathbb E[J_{t+1}]\big)\le\frac{\text{Var}[J_{t+1}]}{(n-\mathbb E[J_{t+1}])^2}\le4^t/n.
\end{align*}
Applying these bounds to $\d_{t+1}$ we get 
\begin{align*}
2 \cdot  \d_{t+1}\le \sum_{k=1}^n\sum_{\ell = 1}^k P(J_t=\ell)\Big|P\big(|I_{t+1}|= k\bigm\vert  |I_t|=\ell\big)-P\big(J_{t+1}= k\bigm\vert  J_t=\ell\big)\Big |+2\cdot \d_t+ 4^t/n.
\end{align*}
Next we plug in the distributions for $|I_{t+1}|-|I_{t}|$ (binomial) and $J_{t+1}$ (Poisson) to get
\begin{align*}
&2\cdot \d_{t+1} 
\le \sum_{k= 1}^n\sum_{\ell = 1}^k P(J_t=\ell)\Big |P\big(\text{Bin}(n-\ell,\ell/n)= k-\ell\big)-P\big(\text{Po}(\ell)= k-\ell\big)\Big |+2\cdot\d_t+ 4^t/n
\end{align*}
and shifting indices yields
\begin{align*}
2\cdot \d_{t+1}
\le&\sum_{\ell= 1}^nP(J_t=\ell)\sum_{k= 1}^n\Big |\big(P(\text{Bin}(n-\ell,\ell/n)= k\big)-P\big(\text{Po}(\ell)= k\big)\Big |+2\cdot \d_t+ 4^t/n\\
\le&\sum_{\ell= 1}^nP(J_t=\ell)\cdot 2\cdot \d\big(\text{Bin}(n-\ell,\ell/n),\text{Po}(\ell) \big)+2\cdot \d_t+ 4^t/n.
\end{align*}
As $\text{d}$ is a metric we can use the triangle inequality and with Lemma $\ref{LeCam}$ we get for all $0\le \ell \le n$
\begin{align*}
    \d\big(\text{Bin}(n-\ell,\ell/n),\text{Po}(\ell) \big)
    & \le \d\big(\text{Bin}(n-\ell,\ell/n),\text{Po}((n-\ell)\ell/n)\big )+\d\big(\text{Po}((n-\ell)\ell/n) ,\text{Po}(\ell) \big)\\
    &\le (n-\ell)(\ell/n)^2+\big|(n-\ell)\ell/n-\ell\big|,
\end{align*}
which is at most ${2\ell^2}/{n}$.
By plugging this into the previous inequality we  get
\begin{align*}
\d_{t+1}
\le~\sum_{\ell= 1}^nP(J_t=\ell)\frac{2\ell^2}{n}+ \d_t+ 4^t/n
\le  2\cdot \frac{\mathbb{E}[J_t^2]}n+ \d_t+ 4^t/n.
\end{align*}
Lemma~\ref{expJ_t} determines the second moment of $J_t$. By using  the induction hypothesis we conclude
\begin{align*}
\d_{t+1}
\le 3\cdot 4^{t}/n+ 2\cdot 4^{t}/n+4^{t}/n\le  2\cdot 4^{t+1}/n.
\end{align*}
\end{proof}

\paragraph{$(H_t)_{t\in \mathbb{N}_0}$ is a martingale that converges to $H$.}
Next we show that the sequence $(H_t)_{t\in \mathbb N_0}$ is a martingale and converges almost surely and in $\mathcal{L}^2$ to the random variable $H$.
\begin{lemma}\label{HConv}
There is a random variable $H$ such that $H_t\to H$ almost surely and in $\mathcal{L}^2$. 
Furthermore $H$ has mean 1 and variance $1/2.$
\end{lemma}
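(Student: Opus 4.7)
The plan is to verify the martingale property directly, then use the second moment computation from Lemma \ref{expJ_t} to conclude $\mathcal{L}^2$-boundedness, and finally invoke the $\mathcal{L}^2$ martingale convergence theorem to obtain the limit $H$ together with its moments.

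First I would fix the filtration $\mathcal{F}_t = \sigma(J_0, \ldots, J_t)$. Since $J_{t+1} = J_t + \text{Po}(J_t)$ and $\mathbb{E}[\text{Po}(\lambda)] = \lambda$, the tower property gives $\mathbb{E}[J_{t+1} \mid \mathcal{F}_t] = J_t + J_t = 2J_t$, so that
\begin{align*}
    \mathbb{E}[H_{t+1} \mid \mathcal{F}_t]
    = 2^{-(t+1)} \mathbb{E}[J_{t+1} \mid \mathcal{F}_t]
    = 2^{-(t+1)} \cdot 2 J_t
    = H_t.
\end{align*}
Together with $H_t \ge 0$ and $\mathbb{E}[H_t] = 2^{-t}\mathbb{E}[J_t] = 1 < \infty$ (by Lemma~\ref{expJ_t}), this shows that $(H_t)_{t \in \mathbb{N}_0}$ is a nonnegative $\mathcal{F}_t$-martingale.

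Next I would establish $\mathcal{L}^2$-boundedness. Using the second moment from Lemma~\ref{expJ_t},
\begin{align*}
    \mathbb{E}[H_t^2]
    = 2^{-2t}\,\mathbb{E}[J_t^2]
    = 2^{-2t}\cdot 2^{t-1}(3\cdot 2^t - 1)
    = \frac{3}{2} - 2^{-(t+1)},
\end{align*}
so $\sup_{t \in \mathbb{N}_0} \mathbb{E}[H_t^2] = 3/2 < \infty$. Therefore $(H_t)$ is a martingale bounded in $\mathcal{L}^2$, and the $\mathcal{L}^2$ martingale convergence theorem yields a random variable $H$ with $H_t \to H$ almost surely and in $\mathcal{L}^2$. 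In particular, $\mathcal{L}^2$-convergence implies $\mathcal{L}^1$-convergence, so
\begin{align*}
    \mathbb{E}[H] = \lim_{t \to \infty} \mathbb{E}[H_t] = 1,
    \qquad
    \mathbb{E}[H^2] = \lim_{t \to \infty} \mathbb{E}[H_t^2] = \frac{3}{2},
\end{align*}
from which $\text{Var}[H] = 3/2 - 1 = 1/2$ follows.

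There is no real obstacle here: once Lemma~\ref{expJ_t} is in hand, the martingale identity is a one-line calculation, the variance bound is uniform in $t$, and both convergence and moment transfer are immediate from standard $\mathcal{L}^2$ theory. The only mildly delicate point is that one must appeal to $\mathcal{L}^2$-convergence (rather than merely a.s.\ convergence plus uniform integrability) to pass to the limit in the second moment; this is why it is convenient to phrase the whole argument through $\mathcal{L}^2$-boundedness rather than through uniform integrability alone.
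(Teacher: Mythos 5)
Your proposal is correct and follows essentially the same route as the paper: verify the martingale property via the tower rule, use Lemma~\ref{expJ_t} to get a uniform $\mathcal{L}^2$ bound, and invoke the $\mathcal{L}^2$ martingale convergence theorem, passing moments to the limit via $\mathcal{L}^2$-convergence. One small point in your favor: your computation $\mathbb{E}[H_t^2]=\tfrac32-2^{-(t+1)}$ is the tight value, whereas the paper's displayed line has a slight slip (it effectively uses $\mathbb{E}[J_{t+1}^2]$ in place of $\mathbb{E}[J_t^2]$, giving the looser bound $\le 6$), though this does not affect the conclusion.
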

\begin{proof}
First we show that $H_t$ is a martingale. Let $\mathcal{F}_t$ be the filtration induced by the random variables $J_t$, then
\begin{align*}
\mathbb{E}\big[H_{t+1}\bigm\vert  \mathcal{F}_t\big]=\mathbb{E}\big[2^{-t-1}\big(J_{t}+\text{Po}(J_t)\big)\bigm\vert  \mathcal{F}_t\big]=2^{-t}J_t=H_t.
\end{align*}
Next, we show that $H_t\in \mathcal{L}^2$. Therefore we compute
\begin{align*}
\mathbb{E}\big[|H_{t}|^2\big] =2^{-2t}\mathbb{E}\big[J_t^2\big]= 2^{-2t}\cdot 2^t(3\cdot 2^{t+1}-1)\le 6
\end{align*}
using Lemma \ref{expJ_t} and consequently 
\begin{align}\label{eq:SeconMH}
\sup_{t\in\mathbb{N}_0}\mathbb{E}\big[|H_{t}|^2\big]<\infty.
\end{align}
This yields the integrability of $H_t$ and as $H_t$ is obviously measurable with respect to $\mathcal{F}_t$ we conclude that it is indeed a martingale. Thus  \eqref{eq:SeconMH} and $\mathcal{L}^p$ convergence of martingales implies the first claim, see e.g.~\cite[Thm.~4.4.6]{Durrett2019}. The values for the expectation and the variance  follow immediately from Lemma \ref{expJ_t} by scaling with $2^{-t}$ and $2^{-2t}$ respectively and then taking the limit.
\end{proof}
An even stronger version of this lemma could be shown, i.e., the martingale converges in $\mathcal{L}^p$ for all $p>1$. In any case,  the version stated here suffices for our purposes. 

In order to show the  properties  of $H$ claimed in Lemma \ref{Lemma1}  we need to describe  its characteristic function. The next lemma does exactly that, but we need a definition first. Let
\begin{align}\label{eq:h}
h(x)=h^{(1)}(x)= x e^{x-1}, \quad h^{(t+1)}=h^{(t)}\circ h,\ t\in \mathbb{N}.
\end{align}
This function is not new in the context of rumor spreading, it plays an important role in the closely related context of \cite{Daknama2021}, where it describes the evolution of the number of uninformed vertices of \Push on complete graphs. 
\begin{lemma}\label{phiItLemma}
The characteristic functions $\varphi_t$ of $H_t$ and $\varphi$ of $H$ satisfy
\begin{align*}
    \varphi_t(x)=h^{(t)}\Big(e^{ix2^{-t}}\Big)\quad \text{and}\quad \varphi(x)=\lim_{t\to \infty}\varphi_t(x).
\end{align*}

\end{lemma}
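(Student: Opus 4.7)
The plan is to derive the recurrence for the probability generating function of $J_t$ first and then pass to the characteristic function by the substitution $w = e^{ix 2^{-t}}$; convergence of $\varphi_t$ will then follow immediately from Lemma \ref{HConv}.

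Concretely, I would define the probability generating function $G_t(w) := \mathbb{E}[w^{J_t}]$ for $w \in \mathbb{C}$ with $|w|\le 1$. Since $J_t$ is a.s.\ a positive integer, this series converges absolutely on the closed unit disk. The base case is $G_0(w) = w$. For the induction step I would condition on $J_t$ and use that $\mathrm{Po}(\lambda)$ has probability generating function $w \mapsto e^{\lambda(w-1)}$, so that
\begin{align*}
G_{t+1}(w)
= \mathbb{E}\bigl[w^{J_t}\,\mathbb{E}[w^{\mathrm{Po}(J_t)}\mid J_t]\bigr]
= \mathbb{E}\bigl[w^{J_t} e^{J_t(w-1)}\bigr]
= \mathbb{E}\bigl[(w e^{w-1})^{J_t}\bigr]
= G_t\bigl(h(w)\bigr).
\end{align*}
To iterate this one needs $h$ to preserve the closed unit disk, which is straightforward: for $|w|\le 1$ one has $|h(w)| = |w| \cdot e^{\operatorname{Re}(w)-1} \le |w| \le 1$, since $\operatorname{Re}(w) \le |w| \le 1$. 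By induction this yields $G_t(w) = h^{(t)}(w)$ on the closed unit disk.

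Next, since $w = e^{ix 2^{-t}}$ lies on the unit circle,
\begin{align*}
\varphi_t(x)
= \mathbb{E}\bigl[e^{ix H_t}\bigr]
= \mathbb{E}\bigl[(e^{ix 2^{-t}})^{J_t}\bigr]
= G_t(e^{ix 2^{-t}})
= h^{(t)}(e^{ix 2^{-t}}),
\end{align*}
which is the first claimed identity. For the second, Lemma \ref{HConv} gives $H_t \to H$ almost surely; as $|e^{ixH_t}| = 1$, the dominated convergence theorem yields $\varphi_t(x) \to \mathbb{E}[e^{ixH}] = \varphi(x)$ pointwise (equivalently, invoke Levy's continuity theorem).

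The only point requiring any attention is the validity of the PGF identity on the unit circle (as opposed to only on $[0,1]$), which is precisely the estimate $|h(w)| \le |w|$ recorded above; everything else is a routine conditioning argument and a standard appeal to almost-sure convergence.
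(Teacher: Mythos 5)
Your proposal is correct and takes essentially the same route as the paper: compute the probability generating function of $J_t$ by conditioning and using the Poisson PGF to obtain the recurrence $G_{t+1}=G_t\circ h$, iterate to get $G_t=h^{(t)}$, evaluate at $e^{ix2^{-t}}$, and pass to the limit via Levy's continuity theorem (or dominated convergence). The only cosmetic difference is that you make explicit the check that $h$ maps the closed unit disk into itself, which the paper leaves implicit.
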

\begin{proof}
To prove the claim we first compute the probability generating function $\tilde{J}_t(x)$ of $J_t$.
Note that $\tilde{J_0}(x)=x$, as $P(J_0=1)=1$. For $t\in\mathbb{N}_0$ and $|x|\le 1$ we get 
\begin{align*}
\tilde{J}_{t+1}(x) =\sum_{k\ge 0}P(J_{t+1}=k)x^k & = \sum_{k\ge 0}P\big(J_t+\text{Po}(J_t)=k\big)x^k\\
&=\sum_{k\ge 0}\sum_{0 \le \ell \le k} P\big(\text{Po}(\ell)=k-\ell\big)P(J_t=\ell)x^k\\
&=\sum_{\ell\ge 0}P(J_t=\ell)\sum_{k\ge0}\frac{\ell ^{k}}{k!}e^{-\ell}x^{k+\ell}\\
& =\sum_{\ell\ge 0}P(J_t=\ell)e^{-\ell + x\ell}x^{\ell} \\
& =\sum_{\ell\ge 0}P(J_t=\ell)\big (x e^{x-1}\big)^\ell
=\tilde{J_t}\big(x e^{x-1}\big)=\big(\tilde{J_t}\circ h\big)(x).
\end{align*}
Thus $J_t$ has characteristic function $x\mapsto h^{(t)}\big(e^{ix}\big)$ and as~$H_t=2^{-t}J_t$ we immediately obtain that~$H_t$ has characteristic function $x \mapsto h^{(t)}\big(e^{ix2^{-t}}\big)$.
With Levy's continuity theorem we infer that the characteristic function of $H_t$ converges to the characteristic function of $H$, as Lemma~\ref{HConv} guarantees that $H_t$ converges to $H$ almost surely and therefore also in distribution. 
\end{proof}

\paragraph{Properties of $H$.} In the last part of this section we show that $H$ is absolutely continuous and almost surely positive. To show absolute continuity, we will argue that the characteristic function of $H$ is integrable. To achieve this we first find a recurrence relation for real and imaginary parts of $\varphi_t$, the characteristic function of $H_t$. We then use this description to find a second order approximation of $\varphi_t$ that eventually allows us to uniformly bound the absolute value of $\varphi_t$ by an integrable function.

The mapping that we will use to describe the real and imaginary parts of $\varphi_t$ is given by
\begin{align*}
F=F^{(1)}:\mathbb{R}^2\to \mathbb{R}^2, F\left (\binom{R}{I}\right )=e^{-1+R}\begin{pmatrix}\cos I&-\sin I\\ \sin I&~\cos I\end{pmatrix}\binom{R}{I},
\enspace \text{and} \enspace
F^{(t+1)} = F^{(t)}\circ F, \enspace t \in \mathbb{N}.
\end{align*}
Moreover, we set $F^{(0)}$ to be the identity on $\mathbb R^2$.
\begin{lemma}\label{F}Let $\varphi_t$ be the characteristic function of $H_t$. Set $I_t(x)=\textnormal{Im}\big(\varphi_t(x)\big)$ (the imaginary part), $R_t(x)=\textnormal{Re}\big(\varphi_t(x)\big)$ (the real part) and $a_t(x)=|\varphi_t(x)|$. Then for all $t\in \mathbb N_0$
\begin{align*}
\binom{R_{t}(x)}{I_{t}(x)}=F^{(t)}\left (\binom{\cos(x2^{-t})}{\sin(x2^{-t})}\right ).
\end{align*}
and
\begin{align*}
a_{t+1}(x)=a_t(x/2)\exp\big(-1+R_t(x/2)\big).
\end{align*}
\end{lemma}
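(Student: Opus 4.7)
The strategy is to recognize $F$ as nothing more than the representation of the complex map $h(z)=ze^{z-1}$ in Cartesian coordinates, and then to iterate. Concretely, for any complex number $z=R+iI$ I would write
\begin{align*}
h(R+iI)=(R+iI)e^{R-1}e^{iI}=e^{R-1}(R+iI)(\cos I + i\sin I),
\end{align*}
and read off its real and imaginary parts. A short expansion shows that these parts are exactly the two coordinates of the vector $F\binom{R}{I}$ as defined in the statement. In other words, under the identification $\mathbb{R}^2\cong\mathbb{C}$ via $\binom{R}{I}\leftrightarrow R+iI$, the map $F$ coincides with $h$.

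Once this is established, the first assertion follows by a straightforward induction on $t$. The base case $t=0$ is immediate since $F^{(0)}$ is the identity and $\varphi_0(x)=e^{ix}$, so both sides of the claimed equation equal $\binom{\cos x}{\sin x}$. For the inductive step, I would use Lemma~\ref{phiItLemma} to write $\varphi_t(x)=h^{(t)}(e^{ix2^{-t}})$ and then invoke the identification above together with $F^{(t)}=F^{(t-1)}\circ F$ to conclude.

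For the second assertion, the key observation is that $h$ and $h^{(t)}$ commute, so
\begin{align*}
\varphi_{t+1}(x)=h^{(t+1)}\bigl(e^{ix/2^{t+1}}\bigr)=h\Bigl(h^{(t)}\bigl(e^{i(x/2)/2^{t}}\bigr)\Bigr)=h\bigl(\varphi_t(x/2)\bigr)=\varphi_t(x/2)\,e^{\varphi_t(x/2)-1}.
\end{align*}
Taking absolute values and using $|e^{w}|=e^{\operatorname{Re}(w)}$ for $w\in\mathbb{C}$ yields
\begin{align*}
a_{t+1}(x)=\bigl|\varphi_t(x/2)\bigr|\cdot e^{\operatorname{Re}(\varphi_t(x/2)-1)}=a_t(x/2)\exp\bigl(-1+R_t(x/2)\bigr),
\end{align*}
which is the claimed identity.

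There is essentially no hard step here; the only care needed is making sure the orientation of composition is consistent (that $h^{(t+1)}=h^{(t)}\circ h=h\circ h^{(t)}$) and that the coordinate identification with $\mathbb{C}$ is applied uniformly. Both are routine once noticed, so the proof reduces to a one-line complex computation and an induction.
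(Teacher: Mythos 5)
Your proof is correct and follows essentially the same route as the paper: both rest on the observation that the recursion $\varphi_{t+1}(x)=h(\varphi_t(x/2))$ from Lemma~\ref{phiItLemma} translates to one application of $F$ componentwise, and then the absolute-value identity $|e^w|=e^{\Re w}$ gives the second claim. The paper unrolls the recurrence by expanding $\Re$ and $\Im$ of $\varphi_t(x/2)\exp(-1+\varphi_t(x/2))$ explicitly at each step, whereas you package the same computation once as the identification of $F$ with $h$ under $\mathbb{R}^2\cong\mathbb{C}$; the two are equivalent, and your remark about needing $h\circ h^{(t)}=h^{(t)}\circ h$ is a real (if trivial) point that the paper uses implicitly in its equation for $\varphi_{t+1}$.
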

\begin{proof} 
Using Lemma \ref{phiItLemma} we obtain for $t\in \mathbb{N}_0$
\begin{align}\label{eq:phiH}
\varphi_{t+1}(x)=h^{(t+1)}\left(e^{ix2^{-t-1}}\right)=h\left(h^{(t)}\left(e^{i(x/2)2^{-t}}\right)\right)=h\big(\varphi_t(x/2)\big).
\end{align}
We continue with a simple observation. 
For two complex numbers $z,w$ the imaginary and real parts of their product satisfy
\begin{align*}
    \text{Re}(z\cdot w)=\text{Re}(z)\text{Re}(w)-\text{Im}(z)\text{Im}(w)\quad \text{and}\quad \text{Im}(z\cdot w)=\text{Re}(z)\text{Im}(w)+\text{Im}(z)\text{Re}(w).
\end{align*}
Using this observation and \eqref{eq:phiH} we obtain
\begin{align*}
I_{t+1}(x)&=\text{Im}\big(\varphi_{t+1}(x)\big)=\text{Im}\Big(\varphi_{t}(x/2)\exp\big(-1+\varphi_t(x/2)\big)\Big)\\
&=R_t(x/2)\text{Im}\Big(\exp\big(-1+\varphi_t(x/2)\big)\Big)+I_t(x/2)\text{Re}\Big(\exp\big(-1+\varphi_t(x/2)\big)\Big)\\
&=\Big(R_t(x/2)\sin\big(I_t(x/2)\big)+I_t(x/2)\cos\big(I_t(x/2)\big) \Big)\exp\big(\!-1+R_t(x/2)\big)
\end{align*}
and similarly for the real part
\begin{align*}
R_{t+1}(x)&=\text{Re}\big(\varphi_{t+1}(x)\big)=\text{Re}\Big(\varphi_{t}(x/2)\exp\big(-1+\varphi_t(x/2)\big)\Big)\\
&=R_t(x/2)\text{Re}\Big(\exp\big(-1+\varphi_t(x/2)\big)\Big)-I_t(x/2)\text{Im}\Big(\exp\big(-1+\varphi_t(x/2)\big)\Big)\\
&=\Big(R_t(x/2)\cos\big(I_t(x/2)\big)-I_t(x/2)\sin\big(I_t(x/2)\big) \Big)\exp\big(\!-1+R_t(x/2)\big).
\end{align*}
Applying these two equations repeatedly and remembering that $\varphi_0(x)=e^{ix}$ and therefore $R_0(x)=\cos(x)$ as well as $I_0(x)=\sin (x)$ implies the first claim.
To show the second claim in the lemma (about $a_{t+1}$) we use again \eqref{eq:phiH} and  $|e^z|=e^{\text{Re}(z)}$ for all $z\in \mathbb C$ 
\begin{align*}
    a_{t+1}(x)=\big| \varphi_t(x/2)\cdot \exp\big(\varphi(x/2)-1\big)\big| = a_t(x/2)\cdot \exp\big(-1+R_t(x/2)\big).
\end{align*}
\end{proof}
With that recursive description at hand we can derive a (first) handy approximation for $\varphi_t$.
\begin{lemma}\label{Fseries}
Let $t\in \mathbb{N}_0$ and $x\in \mathbb R $. For all $0\le j\le \max \{j\in \mathbb N_0: |x2^{-t+j}|\le 1/16 \}$ 
\begin{align*}
 \left| F^{(j)}\left (\binom{\cos(x2^{-t})}{\sin(x2^{-t})}\right )-\binom{1-x^22^{-2t+j-2}(3\cdot2^j+1)}{x2^{-t+j}} \right|\le \binom{|x^3|2^{3(-t+j)}}{|x^2|2^{2(-t+j)}}.
\end{align*}
\end{lemma}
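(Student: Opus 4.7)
I would prove the lemma by induction on $j$, setting $y:=x2^{-t}$ so that the hypothesis $|x2^{-t+j}|\le 1/16$ becomes $|2^j y|\le 1/16$, which in particular stays $\ll 1$ throughout the induction. This smallness is what will allow all Taylor remainders to be absorbed into the claimed error bounds.

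\textbf{Base case} ($j=0$). Since $F^{(0)}$ is the identity, the claim reduces to
$$\left|\cos(y)-\bigl(1-y^2\cdot 2^{-2}(3\cdot 2^0+1)\bigr)\right|\le |y|^3 \quad\text{and}\quad \bigl|\sin(y)-y\bigr|\le y^2,$$
both of which follow from the explicit alternating Taylor series for $\cos$ and $\sin$ together with $|y|\le 1/16$, since the first neglected term controls the remainder.

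\textbf{Induction step}. Assume the statement at level $j$, and write
$$F^{(j)}\binom{\cos(x2^{-t})}{\sin(x2^{-t})}=\binom{R^\ast}{I^\ast}=\binom{1-a_j+\varepsilon_R}{b_j+\varepsilon_I},$$
with $a_j,b_j$ the leading approximations from the lemma statement and $|\varepsilon_R|\le |x|^3 2^{3(-t+j)}$, $|\varepsilon_I|\le x^2 2^{2(-t+j)}$. Because $F^{(j+1)}=F\circ F^{(j)}$, it suffices to apply $F$ to $(R^\ast,I^\ast)$ once, using the explicit formula from Lemma~\ref{F}:
$$F(R^\ast,I^\ast)=e^{-1+R^\ast}\binom{R^\ast\cos I^\ast-I^\ast\sin I^\ast}{R^\ast\sin I^\ast+I^\ast\cos I^\ast}.$$
Expand $e^{-1+R^\ast}=e^{-a_j+\varepsilon_R}$, $\cos I^\ast$ and $\sin I^\ast$ as Taylor series around $0$. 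Since $|a_j|,|b_j|,|\varepsilon_R|,|\varepsilon_I|$ are all $\ll 1$ throughout the admissible range, each series converges geometrically, and one can isolate the leading part plus an explicitly bounded remainder. Collecting the contributions up to order $a_j$ and $b_j^2$ in the first component gives a leading term of the form $1-2a_j-\tfrac{3}{2}b_j^2$, which one checks algebraically against the lemma statement is exactly $1-a_{j+1}$; analogously the second component reduces at leading order to $2b_j=b_{j+1}$.

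\textbf{Main obstacle.} The real work is bookkeeping: several independent sources of error (the Taylor remainders of $\cos$, $\sin$, $e^{\cdot}$, the inductive errors $\varepsilon_R,\varepsilon_I$, and their cross products with $a_j,b_j$) must each be shown to fit into the targets $|x|^3 2^{3(-t+j+1)}$ and $x^2 2^{2(-t+j+1)}$. The key observation that makes this close is that the target bounds themselves grow by factors $2^3$ and $2^2$ per step, while the dominant error contributions gain only factors of the form $2\cdot$ or $2^2\cdot$; the spare geometric slack together with the uniform bound $|2^{j+1}y|\le 1/16$ absorbs every cross term. No new ideas are needed beyond careful arithmetic; it is essentially a discrete-time second-order Taylor analysis of the nonlinear map $h(z)=z e^{z-1}$ iterated starting near $z=1$.
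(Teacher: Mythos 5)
Your route is the same as the paper's: induction on $j$, apply $F$ once to the inductive approximation, Taylor-expand $\cos$, $\sin$, and $\exp$, read off the leading-order recursion $a_{j+1}=2a_j+\tfrac32 b_j^2$ and $b_{j+1}=2b_j$, and absorb the remainders into the error budget using the slack between the rates $2^3$, $2^2$ at which the targets grow and the slower growth of the actual one-step errors. This is exactly the paper's argument (there $F_1 = 1 - x^2\bigl(2\alpha_j + \tfrac32\beta_j^2\bigr) \pm 6\Delta_{j,1}$ against the target $\Delta_{j+1,1}=8\Delta_{j,1}$, etc.).

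However, your base-case claim is false as written and flags something you should have caught by doing the arithmetic. At $j=0$ the lemma's first component reads $1 - y^2\cdot 2^{-2}(3\cdot 2^0+1) = 1 - y^2$, but $\cos(y) = 1 - y^2/2 + O(y^4)$, so $|\cos(y)-(1-y^2)|\approx y^2/2$, which is of order $y^2$ and far exceeds the claimed bound $|y|^3$ on the admissible range $|y|\le 1/16$. ``The first neglected term controls the remainder'' does not apply, because $1-y^2$ is not a truncation of the Taylor series of $\cos$. The induction step is insensitive to this, since it only uses the recursion $\alpha_{j+1}=2\alpha_j+\tfrac32\beta_j^2$; the base case is what pins down $\alpha_0$ and shows the coefficient in the lemma statement should read $3\cdot 2^j - 1$ rather than $3\cdot 2^j + 1$, i.e.\ $\alpha_0=2^{-2t-1}$, so that the leading term at $j=0$ is $1-y^2/2$ and \eqref{eq:cosexp} applies directly. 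This correction is also consistent with $\mathrm{Re}\bigl(\varphi_t(x)\bigr)\approx 1-\tfrac12\mathbb{E}[H_t^2]x^2 = 1 - 2^{-t-2}(3\cdot 2^t-1)x^2$ coming from Lemma~\ref{expJ_t} read against Lemma~\ref{F}. Had you worked out the $j=0$ case explicitly rather than gesturing at Taylor, the discrepancy would have surfaced.
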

\begin{proof}
If $\big\{j\in \mathbb N_0: |x2^{-t+j}|\le 1/16 \big\}=\emptyset$ we have nothing to show, thus we assume that $j_{\max}:= \max\big\{j\in \mathbb N_0: |x2^{-t+j}|\le 1/16 \big\}\ge 0$. We will show the claim by induction over all $0\le j\le j_{\max} $. Very important ingredients in the forthcoming arguments are the following estimates for  smallish~$x$ that are rather easy to show:
\begin{align}\label{eq:cosexp}
\left |\cos(x)-\left (1-\frac{x^2}{2}\right )\right |\le \frac{x^4}{24}\quad \text{for all}\quad |x|\le 7 
\end{align}
and
\begin{align}\label{eq:sinexp}
\big|\sin(x)-x\big |\le \frac{x^3}{5}\quad \text{for all}\quad |x|\le 2 .
\end{align}
These estimates  yield (with quite some room to spare) immediately the induction start $(j=0)$, as by convention $F^{(0)}$ is the identity on $\mathbb R^2$.
We proceed with the induction step. For the following computations abbreviate
\begin{align*}
\alpha_j=2^{-2t+j-2}(3\cdot2^{j}+1),\quad \beta_j=2^{-t+j}\quad \text{and}\quad\Delta_j=\binom{\Delta_{j,1}}{\Delta_{j,2}}=\binom{|x^3|2^{-3t+3j}}{|x^2|2^{-2t+2j}}.
\end{align*}
In the remainder of this proof we use the following notation. For real numbers $a,b$ we write $a\pm b$ to denote \emph{some}  real number $c$ that satisfies $|a-c|\le b$. In particular, if we apply a function, e.g.,~$F$, to $a\pm b$ we understand that as $F$ applied to some number $c$ in the designated interval.
This notation is useful as we are only interested in upper and lower bounds on $F(a\pm b)$ that we can deduce from $a$ and $b$ only. 

Let $0\le j\le j_{\max}-1$. By applying the induction hypothesis we get
\begin{align}\label{eq:Fcomp}
F^{(j+1)}\left (\binom{\cos(x2^{-t}}{\sin(x2^{-t}}\right )=F\left (\binom{1-x^2\alpha_{j}}{x\beta_{j}}\pm \Delta_{j}\right )=:\binom{F_1}{F_2}.
\end{align}
Using the definition of $F$ we obtain for the first component 
\begin{align*}
F_1=(F_{11}-F_{12})F_{13},
\end{align*}
where we abbreviated 
\begin{align*}
    F_{11}&=\big(1-x^2\alpha_{j}\pm \Delta_{j,1}\big)\cos\big(x\beta_{j}\pm \Delta_{j,2}\big)\\
    F_{12}&=\big(x\beta_{j}\pm \Delta_{j,2}\big)\sin\Big(x\beta_{j}\pm \Delta_{j,2}\big)\\
    F_{13}&=\exp\big(-x^2\alpha_{j}\pm \Delta_{j,1}\big).
\end{align*}
To study these expressions we look at three recurring components first. Note that, as $(\Delta_{j,2})^2\le \Delta_{j,1}/16$ by our assumption on $j\le j_{\max}$,
\begin{align}\label{eq:AuxQuad}
    \big(x\beta_{j} \pm \Delta_{j,2}\big)^2=(x\beta_j)^2\pm   2|x|\beta_{j}\Delta_{j,2}\pm (\Delta_{j,2})^2= (x\beta_j)^2\pm  \frac{33}{16}\Delta_{j,1}.
\end{align}
Furthermore, using again $|x|2^{-t+j}\le 1/16$ guaranteed by $j\le j_{\max}$,
\begin{align}\label{eq:AuxCub}
    \big|\big(x\beta_{j}\pm  \Delta_{j,2}\big)^3\big|\le  \sum_{i=0}^3\binom{3}{i}(|x|\beta_{j})^i(\Delta_{j,2})^{3-i}\le\Delta_{j,1}\left(1+\frac{3}{16}+\frac{3}{16^2}+\frac{1}{16^3}\right)\le  \frac{6}5\Delta_{j,1}
\end{align}
and similarly
\begin{align}\label{eq:AuxQuart}
   \big|\big(x\beta_{j}\pm \Delta_{j,2}\big)^4\big |\le \sum_{i=0}^4\binom{4}{i}(|x|\beta_{j})^i(\Delta_{j,2})^{4-i}\le \frac{4}{50}\Delta_{j,1}.
\end{align}
Lastly, we use again $|x|2^{-t+j}\le 1/16$ and $x^4a_j^2\le \Delta_{j,1}/16$ to bound \begin{align}\label{eq:AuxQuadAj}
    \big|\big(x^2\alpha_{j}\pm\Delta_{j,1}\big)^2\big|\le |x|^4\alpha_j^2+|x|^2\alpha_j\Delta_{j,1}+\Delta_{j,1}^2\le \Delta_{j,1}\left(\frac{1}{16}+\frac{1}{16^2}+\frac{1}{16^3}\right)\le \frac{1}{8}\Delta_{j,1}.
\end{align}
Combining these bounds with \eqref{eq:cosexp} and \eqref{eq:sinexp} we will obtain estimates for the $\sin$ and $\cos$ terms in $F_{11}$ and $F_{12}$. By   \eqref{eq:cosexp} 
\begin{align*}
    \cos\big(x\beta_{j}\pm \Delta_{j,2}\big)=1-\frac{(x\beta_j\pm \Delta_{j,2})^2}2\pm \frac{(x\beta_j\pm \Delta_{j,2})^4}{24} 
  \end{align*}
and by combining this with \eqref{eq:AuxQuad} and \eqref{eq:AuxQuart} we obtain that
\begin{align}\label{eq:AuxCos}
   \cos\big(x\beta_{j}\pm \Delta_{j,2}\big)=1-\frac{(x\beta_j)^2}2\pm \frac{17}{16}\Delta_{j,1}.
\end{align}
In the same way, using \eqref{eq:sinexp} and \eqref{eq:AuxCub},
\begin{align}\label{eq:AuxSin}
    \sin\big(x\beta_{j}\pm \Delta_{j,2}\big)=(x\beta_j\pm \Delta_{j,2})\pm \frac{\big(x\beta_{j}\pm \Delta_{j,2}\big)^3}5 = x\beta_j\pm \Delta_{j,2} \pm \frac{\Delta_{j,1}}{4}.
\end{align}
Having done these preparations we proceed with deriving  bounds for $F_{11},F_{12},F_{13}$.
We begin with~$F_{11}$ and using \eqref{eq:AuxCos} as well as $|x|2^{-x+j}\le 1/16$ we obtain that
\begin{align*}
F_{11}= \left (1-x^2\alpha_{j}\pm\Delta_{j,1}\right )\left (1-\frac{(x\beta_{j})^2}{2}\pm  \frac{17}{16}\Delta_{j,1}\right )= 1-x^2\left (\alpha_j+\frac{\beta_{j}^2}{2}\right )\pm \frac{17\Delta_{j,1}}{8}.
\end{align*}
Furthermore, by making use of \eqref{eq:AuxSin} and $|x|2^{-x+j}\le 1/16$ we obtain 
\begin{align*}
F_{12} =\left (x\beta_{j}\pm\Delta_{j,2}\right )\left (x\beta_{j}\pm\Delta_{j,2} \pm \frac{\Delta_{j,1}}4 \right ) = (x\beta_j)^2\pm \frac{17\Delta_{j,1}}8
\end{align*}
and using the estimate $|\exp(x)-(1+x)|\le x^2$, valid for all $|x|\le 1$, as well as \eqref{eq:AuxQuadAj}, we get 
\begin{align*}
F_{13}= 1-x^2\alpha_{j}\pm\Delta_{j,1}\pm\big(x^2\alpha_{j}\pm\Delta_{j,1}\big)^2=1-x^2\alpha_j\pm \frac{9\Delta_{j,1}}{8}.
\end{align*}
Thus, putting $F_{11},F_{12}$ and $F_{13}$ together and using oce more  $|x|2^{-t+j}\le 1/16$, we get that
\begin{align*}
F_1&=\left (1-x^2\left (\alpha_j+\frac{3\beta_{j}^2}{2}\right )\pm \frac{34\Delta_{j,1}}{8}\right )\left (1-x^2\alpha_j\pm \frac{9\Delta_{j,1}}{8}\right )\\
&=1-x^2\left (2\alpha_j+\frac{3\beta_{j}^2}{2}\right )\pm 6\Delta_{j,1}
=1-x^2\alpha_{j+1}\pm \Delta_{j+1,1}
\end{align*}
confirming the induction step on the first component in \eqref{eq:Fcomp}. Going forward we switch our attention to the second component, which we again split into three parts
\begin{align*}
F_2=(F_{21}-F_{22})F_{13},
\end{align*}
where
\begin{align*}
    F_{21}&=\big(1-x^2\alpha_{j}\pm\Delta_{j,1}\big)\sin\big(x\beta_{j}\pm\Delta_{j,2}\big)\\
    F_{22}&=\big(x\beta_{j}\pm\Delta_{j,2}\big)\cos\big(x\beta_{j}\pm\Delta_{j,2}\big).
\end{align*}
Similarly, as above using \eqref{eq:AuxSin}, \eqref{eq:AuxCos} and $|x|2^{-t+j}\le 1/16$ we extend these expressions. We start with $F_{21}$
\begin{align*}
F_{21}&= \left (1-x^2\alpha_{j}\pm\Delta_{j,1}\right )\left (x\beta_{j}\pm\Delta_{j,2} \pm \frac{\Delta_{j,1}}4 \right )= x\beta_j\pm \frac{9\Delta_{j,2}}{8}.
\end{align*}
Continuing with $F_{22}$, again applying \eqref{eq:AuxCos} and $|x|2^{-t+j}\le 1/16$,
\begin{align*}
F_{22}=\left (x\beta_{j}\pm\Delta_{j,2}\right )\left (1-\frac{(x\beta_{j})^2}{2}\pm  \frac{17\Delta_{j,1}}{16}\right ) = x\beta_j\pm \frac{17\Delta_{j,2}}{16}.
\end{align*}
Finally, we combine $F_{21},F_{22}$ and $F_{13}$ and with $|x|2^{-t+j}\le 1/16$ we get
\begin{align*}
F_2&=\left (2x\beta_j\pm \frac{33\Delta_{j,2}}{16}\right )\left (1-x^2\alpha_j\pm \frac{9\Delta_{j,1}}{8}\right )=2x\beta_j\pm 3\Delta_{j,2}
=x\beta_{j+1}\pm \Delta_{j+1,2}.
\end{align*}
Thus we confirmed the induction step and conclude the proof.
\end{proof}
The next lemma bounds the absolute value  of $\varphi_t(x)$ for large values of $x$ and $t$ implying that $\varphi$ is integrable, a sufficient condition for the absolute continuity of $H$. We did not make any effort to optimize the involved constants.
\begin{lemma}
\label{HCharInt}
For all $x\in \mathbb{R},\ |x|\ge 2^{2^{17}}$  and $t\in \mathbb{N},\ t\ge \log_2(16|x|)$, 
\begin{align*}
|\varphi_t(x)|=a_t(x)\le |x|^{-1.2}.
\end{align*}
\end{lemma}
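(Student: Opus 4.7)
The plan is to show $\sum_{j=0}^{t-1}\bigl(1-R_j(x 2^{j-t})\bigr)\ge 1.2\ln|x|$, which, by iteratively unfolding $a_{t+1}(x) = a_t(x/2)\,e^{R_t(x/2)-1}$ from Lemma~\ref{F}, is equivalent to the target $a_t(x)\le |x|^{-1.2}$. Setting $y_0 := x\cdot 2^{-t}$ (so $|y_0|\le 1/16$ by hypothesis) and $w_j := h^{(j)}(e^{iy_0})$, Lemmas~\ref{F} and~\ref{phiItLemma} together give $R_j(x 2^{j-t}) = \mathrm{Re}(w_j)$, and the recursion reads $|w_{j+1}| = |w_j|\,e^{\mathrm{Re}(w_j)-1}$. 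Since $|w_0|=1$, this inductively yields $|w_j|\le 1$, hence every summand in $S_t := \sum_{j=0}^{t-1}(1-\mathrm{Re}(w_j))$ is non-negative.

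My strategy is to show that after finitely many transition iterations $|w_j|$ falls below a threshold $c_0<1-1.2\ln 2\approx 0.168$, so that each subsequent iteration contributes at least $1-c_0>1.2\ln 2$ to $S_t$. Let $j_{\max}$ be the index from Lemma~\ref{Fseries}, so $|y_0 2^{j_{\max}}|\in(1/32,1/16]$ and $t-j_{\max}\ge \log_2(16|x|) = 4+\log_2|x|$. The key claim is that there exist absolute constants $K_0\in\mathbb N$ and $c_0<1-1.2\ln 2$ such that $|w_{j_{\max}+K_0}|\le c_0$ uniformly in $y_0$. Since the map $g(c) := c\,e^{c-1}$ is increasing on $[0,1]$ with $g(c_0)<c_0$, this bound self-propagates to all $j\ge j_{\max}+K_0$, giving $R_j\le |w_j|\le c_0$ and thus $1-R_j\ge 1-c_0$ there. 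Dropping the non-negative contributions from earlier indices,
$$S_t \ge (1-c_0)\,(t-j_{\max}-K_0)\ge (1-c_0)\bigl(4+\log_2|x|-K_0\bigr),$$
and the target $S_t\ge 1.2\ln 2\cdot \log_2|x|$ rearranges to $(1-c_0-1.2\ln 2)\log_2|x|\ge (1-c_0)(K_0-4)$. The left side grows linearly in $\log_2|x|$ with positive slope, and the assumption $|x|\ge 2^{2^{17}}$ provides enormous slack to absorb the bounded right side.

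The main obstacle is the key claim. By Lemma~\ref{Fseries}, $w_{j_{\max}}$ lies in a compact set close to $\{1+iy : |y|\in(1/32,1/16]\}$, a subset of the open unit disk contained in the basin of attraction of the attracting fixed point $0$ of $h$ (where $|h'(0)|=e^{-1}$). The existence of a uniform $K_0$ is therefore in essence a compactness argument; to make it quantitative I would extend the Lemma~\ref{Fseries} estimate by a bounded number of further steps into the non-linear regime via the Taylor expansion $h(1-u)=1-2u+\tfrac{3}{2}u^2+O(u^3)$, then iterate $h$ by direct finite computation until $|w_j|$ crosses the threshold $c_0$. This bookkeeping is where the precise constants in the statement (the exponent $1.2$ and the threshold $2^{2^{17}}$) get pinned down.
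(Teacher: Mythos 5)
Your approach is essentially the paper's own. After unfolding $a_{t+1}(x)=a_t(x/2)\,e^{R_t(x/2)-1}$ from Lemma~\ref{F}, both you and the paper apply Lemma~\ref{Fseries} at the transition index (your $j_{\max}$, the paper's $j^\star=t-\lceil\log_2(16|x|)\rceil$) to place the iterate $w_{j_{\max}}$ strictly inside the unit disk; both then argue that a bounded number $K_0$ of further rounds drives the damping factor below a constant threshold strictly less than $1-1.2\ln 2$, after which every remaining round contributes a fixed amount and the hypothesis $|x|\ge 2^{2^{17}}$ absorbs the additive slack. Two corrections are in order. First, you describe $w_{j_{\max}}$ as lying near $\{1+iy:|y|\in(1/32,1/16]\}$ and call this a subset of the open unit disk, but $|1+iy|>1$, so it is not; what actually pushes $w_{j_{\max}}$ inside the disk is precisely the quadratic correction $-\frac{3}{4}\eta^2+O(|\eta|^3)$ supplied by Lemma~\ref{Fseries}, which yields an explicit bound $|w_{j_{\max}}|\le 1-c_1$ for an absolute constant $c_1>0$, and this step must be made. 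Second, for the existence of a uniform $K_0$ you sketch a complex Taylor iteration of $h$ near $1$, which is harder than necessary: the scalar inequality you already invoke for the self-propagation, namely $|w_{j+1}|=|w_j|\,e^{\mathrm{Re}(w_j)-1}\le |w_j|\,e^{|w_j|-1}=g(|w_j|)$, already applies from $j_{\max}$ onward, so $|w_{j_{\max}+k}|\le g^{(k)}(1-c_1)$ and a finite number of iterations of the monotone one-dimensional map $g$ pins down $K_0$. This is exactly the mechanism the paper uses (on the real part rather than the modulus): it establishes the inequality $R_{i+1}\le R_i\,e^{-1+R_i}$ and chains it to $R_{j^\star+k}\le(1-\delta^2/2)\,e^{-k\delta^2/2}$ with $\delta=1/32$, landing below $e^{-2}\approx 0.135<1-1.2\ln 2$ after $K_0=4/\delta^2=4096$ steps, after which the computation with the stated constants goes through as you outline.
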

\begin{proof}
Let $\delta=1/32$. As $|x| \ge 1$  there is some $t_0\ge 0$ such that $x2^{-t_0}\in [\delta,2\delta]$. To be completely explicit,
\[
    t_0 := \lceil\log_2(16|x|)\rceil.
\]
Thus $t\ge t_0$  and set $j^\star= t-t_0\ge 0$. Then with Lemma  \ref{Fseries} and $\sigma(x)$ denoting the sign of $x$
\begin{align}\label{eq:Bound1}
 \binom{F_1}{F_2}:=F^{(j^\star)}\left (\binom{\cos(x2^{-t})}{\sin(x2^{-t})}\right )\le\binom{1-5\delta^2/8}{\sigma(x)\cdot2\delta} + \binom{(2\delta)^3}{(2\delta)^2}\le \binom{1-\delta^2/2}{\sigma(x)\cdot 2\delta+(2\delta)^2}.
\end{align}
Furthermore observe that by the definition of $F$ and the facts that $0\le \cos i\le 1$ and $0\le i \sin i$ for all  $i\in [-\pi/2,\pi/2]$
\begin{align}\label{eq:Bound2}
F\left (\binom{r}{i}\right )= e^{-1+r}\binom{r\cos i-i\sin i}{r\sin i+i\cos i}\le \binom{re^{-1+r}}{\pi/2}\quad \text{for all } r\in [0,1],\ i\in [-\pi/2,\pi/2].
\end{align}
Moreover, recall from Lemma \ref{F} that
$$
    \binom{R_i(x2^{-t+i})}{I_i(x2^{-t+i})} = F^{(i)}\left (\binom{\cos(x2^{-t})}{\sin(x2^{-t})}\right )
    \quad\text{for}\quad
    R_t(x) = \text{Re}(\varphi_t(x))
    \text{ and }
    I_t(x) = \text{Im}(\varphi_t(x)).
$$
Thus we can bound $R_i(x2^{-t+i}),\ i\ge j^\star$ 
by using~\eqref{eq:Bound2} for the first $i-j^\star$ applications of $F$ and~\eqref{eq:Bound1} for the remaining $j^\star$ to infer that
\begin{align*}
    \binom{R_i(x2^{-t+i})}{I_i(x2^{-t+i})} \le  \binom{F_1\cdot e^{ (i-j^\star)\cdot (-1+F_1)}}{\pi/2}\le  \binom{(1-\delta^2/2)\cdot e^{- (i-j^\star)\cdot \delta^2/2}}{\pi/2},\quad i\ge j^\star.
\end{align*}
In particular  
\begin{align}\label{eq:R_j}
R_i(x2^{-t+i})\le 1\quad \text{and} \quad R_{i^\star}\big(x2^{-t+i^\star}\big)\le e^{-2}\quad \text{for all}\quad   i\ge 0 \text{ and } i^\star\ge j^\star+4/\delta^2.
\end{align}
Now we switch our focus to $a_t(x)$. By applying Lemma~\ref{F} and~\eqref{eq:R_j}
\begin{equation*}\label{eq:a_tBound}
\begin{aligned}
a_{t}(x)&=a_{t-1}(x/2)\cdot e^{-1+R_{t-1}(x/2)}\le \exp\left ({\sum_{i=0}^{t-1}\big(-1+R_i(x2^{-t+i})\big)}\right )\\
&\le \exp\left ({\sum_{i=j^\star+4/\delta^2}^{t-1}\big(-1+R_i(x2^{-t+i})\big)}\right )
\le \exp\left ({\sum_{i=j^\star+4/\delta^2}^{t-1}\big(-1+e^{-2}\big)}\right ).
\end{aligned}
\end{equation*}
Note that the sum in the exponential is non-empty, as the definition of $t_0$ implies that $j^\star \le t-\log_2\big(|x|/(2\delta)\big )$ and as $|x|>2^{2^{17}}$  we have $\log_2\big(|x|/(2\delta)\big )>4/\delta^2+1$.
Thus
\begin{align*}
a_{t+1}(x)&\le  \exp\Big (\big(\log_2(|x|/(2\delta))-4/\delta^2\big)\big(-1+e^{-2}\big)\Big )\\
&\le  \exp \Big (-\big(4/\delta^2-\log_2(2\delta)\big)\big(-1+e^{-2}\big)\Big)\cdot |x|^{(-1+e^{-2})/\ln 2}.
\end{align*}
This implies that  $a_t(x)\le |x|^ {-1.2}$, since  numerically $(-1+e^{-2})/\ln 2 \le -1.24$ and for all $x$ with $|x|>2^{2^{17}}$ additionally $\exp (-(4/\delta^2-\log_2(2\delta))(-1+e^{-2}))\cdot  |x|^{-0.04}\le 1$.
\end{proof}
A close inspection of the previous proof suggests that actually $a_t(x) \sim |x|^{-c}$ with $c = 1/\ln 2 \approx 1.44$. This would be interesting (and it is harder) to prove and it may have important consequences, but we will not need that; for our purpose it is enough to know that $|\varphi|$ is integrable.
Next we prove the last  remaining claim in Lemma \ref{Lemma1}.
\begin{lemma}\label{HCont}\label{Hpos}\label{HBound}
$H$ is absolute continuous and  almost surely positive.
\end{lemma}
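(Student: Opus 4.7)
The plan is to deduce both parts of the lemma from the single fact that the characteristic function $\varphi$ of $H$ is Lebesgue integrable, for which most of the work is already done in Lemma~\ref{HCharInt}. Once $\varphi \in L^{1}(\mathbb{R})$ is in hand, absolute continuity follows by Fourier inversion, and almost sure positivity then comes essentially for free: absolute continuity rules out an atom at $0$, while $H \ge 0$ a.s.\ is immediate because $H$ is the almost sure limit of the strictly positive sequence $H_t = 2^{-t}J_t$ (note that $J_{t+1} - J_t = \mathrm{Po}(J_t) \ge 0$, so $J_t \ge J_0 = 1$ deterministically).

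To establish integrability of $\varphi$, I would use Lemma~\ref{phiItLemma}, which yields $\varphi(x) = \lim_{t \to \infty} \varphi_t(x)$ pointwise. Letting $t \to \infty$ in the bound of Lemma~\ref{HCharInt} gives $|\varphi(x)| \le |x|^{-1.2}$ for every $|x| \ge 2^{2^{17}}$. Combined with the universal estimate $|\varphi| \le 1$, this yields
\[
    \int_{\mathbb{R}} |\varphi(x)|\,\mathrm{d}x
    \;\le\; 2\cdot 2^{2^{17}} + 2\int_{2^{2^{17}}}^{\infty} x^{-1.2}\,\mathrm{d}x
    \;<\;\infty,
\]
since the exponent $1.2 > 1$ makes the tail integrable.

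From here, absolute continuity is a standard application of Fourier inversion: when $\varphi \in L^{1}(\mathbb{R})$, the distribution of $H$ admits the bounded continuous density
\[
    f(y) \;=\; \frac{1}{2\pi} \int_{\mathbb{R}} e^{-ixy}\varphi(x)\,\mathrm{d}x, \qquad y \in \mathbb{R},
\]
so $H$ is absolutely continuous with respect to Lebesgue measure. Having a density forces $P(H = 0) = 0$, and together with $H \ge 0$ almost surely this gives $H > 0$ almost surely, completing the proof.

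There is no serious obstacle once Lemma~\ref{HCharInt} is available; the only points that warrant care are passing from the pointwise bound on $|\varphi_t|$ to the same bound on $|\varphi|$ (justified by pointwise convergence $\varphi_t \to \varphi$ coming from Lévy's continuity theorem applied to the a.s.\ convergence $H_t \to H$ in Lemma~\ref{HConv}) and observing that the process $J_t$ is non-decreasing, so that positivity of $H$ is not lost at the level of almost sure limits beyond what the continuity of the distribution already rules out.
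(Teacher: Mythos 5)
Your proposal is correct and follows essentially the same route as the paper: both deduce integrability of $\varphi$ from Lemma~\ref{HCharInt} together with $|\varphi|\le 1$, invoke Fourier inversion to get absolute continuity, and then combine continuity of the distribution with $H\ge 0$ (from $H_t\ge 0$ and a.s.\ convergence) to conclude $H>0$ a.s. You merely spell out the passage from the bound on $\varphi_t$ to the bound on $\varphi$ a bit more explicitly than the paper does.
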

\begin{proof}
$\varphi$ is a characteristic function and therefore bounded by 1. Thus by Lemma \ref{HCharInt} it is integrable and this implies the absolute continuity of $H$.
Next we argue that $H$ is almost surely positive. We have shown that $H_t\ge 0$ for all $t$ and as $H_t\overset{t\to\infty}{\longrightarrow}H$ it follows that $H\ge 0$. Furthermore we have just shown that $H$ is indeed a continuous random variable and therefore $P(H=0)=0$ and consequently $H>0$ almost surely.

\end{proof}
\paragraph{Conclusion.}
We have shown that $2^{-t}|I_t|$ is close to $H_t$, which is a martingale that converges to $H$, a continuous random variable. Concluding this subsection we infer Lemma \ref{Lemma1} from these statements.

\begin{proof}[Proof of Lemma \ref{Lemma1}.]
In Lemma \ref{poisApp} we have shown that for all $t\in \mathbb N$
$$ \d(|I_t|,J_t) =\d(2^{-t}|I_t|,2^{-t}J_t)\le 2\cdot 4^t/n.$$ Moreover in Lemma \ref{HCont} we have shown convergence of $2^{-t}J_t$ to $H$ in $\mathcal{L}^2$ and therefore also in distribution. Thus for all $\varepsilon>0$
$$\sup_{x\in \mathbb R}\big|P\big({2^{-t}|I_t|\ge x}\big)-P\big({2^{-t}J_t\ge x}\big)\big|\le \varepsilon/2$$
as well as
$$\sup_{x\in \mathbb R}\big|P\big({2^{-t}J_t\ge x}\big)-P(H\ge x)\big|\le \varepsilon/2.$$
and therefore the claimed convergence follows by the triangle inequality.
Lemma \ref{HCont} shows the final claim: $H$ is continuous and almost surely positive.
\end{proof}
Now that we have proven Lemma \ref{Lemma1}, we state and prove a simple corollary for later reference. 
\begin{corollary}\label{ProbCond}
Let $t_1=\lfloor (1/3)\log_2 n\rfloor$. Then with high probability 
$
    |I_{t_1}| = \Theta\big(n^{1/3}\big).
$
\end{corollary}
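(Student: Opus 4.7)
The plan is to deduce the corollary directly from Lemma~\ref{Lemma1} by combining the distributional approximation $2^{-t_1}|I_{t_1}| \approx H$ with the two qualitative properties of the limit $H$ that we already have in hand, namely that $H$ is almost surely positive and almost surely finite. Since $t_1 = \lfloor(1/3)\log_2 n\rfloor$ satisfies both $t_1 \to \infty$ and $t_1 \le \log_2(n^{1/3})$, Lemma~\ref{Lemma1} applies with $t = t_1$ once $n$ is large enough.

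Concretely, I would proceed as follows. Fix $\varepsilon > 0$. Because $H$ is almost surely positive (Lemma~\ref{HCont}), the distribution function of $H$ satisfies $P(H \le \delta) \to 0$ as $\delta \downarrow 0$, so one can choose $\delta = \delta(\varepsilon) > 0$ with $P(H < \delta) \le \varepsilon/4$. Likewise, since $H$ is a (finite-valued) random variable, $P(H > M) \to 0$ as $M \to \infty$, so there is $M = M(\varepsilon) < \infty$ with $P(H > M) \le \varepsilon/4$. Invoking Lemma~\ref{Lemma1} with the tolerance $\varepsilon/4$ and the thresholds $\delta$ and $M$, I obtain that for all sufficiently large $n$,
\begin{align*}
P\bigl(2^{-t_1}|I_{t_1}| < \delta\bigr) &\le P(H < \delta) + \varepsilon/4 \le \varepsilon/2, \\
P\bigl(2^{-t_1}|I_{t_1}| > M\bigr) &\le P(H > M) + \varepsilon/4 \le \varepsilon/2.
\end{align*}
A union bound then gives $P\bigl(\delta \le 2^{-t_1}|I_{t_1}| \le M\bigr) \ge 1 - \varepsilon$.

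To translate this into the $\Theta(n^{1/3})$ statement, note that $t_1 = \lfloor(1/3)\log_2 n\rfloor$ yields $n^{1/3}/2 \le 2^{t_1} \le n^{1/3}$, so on the above event,
\begin{align*}
\frac{\delta}{2}\, n^{1/3} \le \delta\, 2^{t_1} \le |I_{t_1}| \le M\, 2^{t_1} \le M\, n^{1/3}.
\end{align*}
Since $\varepsilon > 0$ was arbitrary, this is exactly $|I_{t_1}| = \Theta(n^{1/3})$ with high probability, as claimed.

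There is essentially no obstacle here: both inputs (the distributional approximation of $2^{-t_1}|I_{t_1}|$ by $H$, and the positivity/continuity of $H$) are already established in Lemma~\ref{Lemma1}. The only thing to be mindful of is that the error bound in Lemma~\ref{Lemma1} requires $t$ to be at least some constant $t_0 = t_0(\varepsilon)$ and at most $\log_2(n^{1/3})$; both constraints are automatically satisfied by $t_1 = \lfloor(1/3)\log_2 n\rfloor$ for all large enough $n$.
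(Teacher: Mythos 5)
Your proof is correct and follows essentially the same route as the paper's: invoke the distributional approximation of Lemma~\ref{Lemma1} with $t = t_1$, then use that $H$ is almost surely positive and finite to conclude $2^{-t_1}|I_{t_1}|$ is bounded away from $0$ and $\infty$ in probability. The only cosmetic difference is that you make the $\varepsilon$-$\delta$ quantification explicit, whereas the paper phrases the same two bounds informally using $o(\cdot)$ and $\omega(\cdot)$ inside probability expressions before concluding from the density and tail decay of $H$.
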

\begin{proof}
Lemma \ref{Lemma1} yields that
\begin{align*}
    P\big(|I_{t_1}| = o(n^{1/3})\big)
    \le P\big(H =  2^{-t_1}o(n^{1/3})\big) + o(1)
    = P\big(H = o(1)\big) +o(1).
\end{align*}
Since $H$ has a density this is $o(1)$. Moreover, by applying again Lemma~\ref{Lemma1}
\begin{align*}
    P\big(|I_{t_1}| = \omega(n^{1/3})\big)
    \le P\big(H = 2^{-t_1}\omega(n^{1/3})\big) + o(1)
    = P\big(H =  \omega(1)\big)+o(1).
\end{align*}
However, since $P(H \ge h) \to 0$ when $h\to \infty$ the proof is completed.
\end{proof}
\subsection{Proof of Lemma \ref{Lemma2}}\label{SubSLemma2}

We begin with a simple lemma that determines the expected number of informed and uninformed vertices after a given round. 
\begin{lemma}\label{expprec}
For any $t \in \mathbb{N}_0$ 
\begin{align*}
 \mathbb{E}\big[|U_{t+1}|\bigm\vert  I_t\big]=\big(|U_t|/n\big )^2 \, n\quad \text{and}\quad \mathbb{E}\big[|I_{t+1}|\bigm\vert  I_t\big]=2|I_t|-|I_t|^2/n.
\end{align*}
\end{lemma}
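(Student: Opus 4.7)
The plan is to derive both identities directly from the protocol description, exploiting the independence of the uninformed vertices' choices. The starting point is the distributional identity already recorded in \eqref{eq:pullBinom}: conditional on $I_t$,
$$
    |I_{t+1}| = |I_t| + \text{Bin}\big(|U_t|,|I_t|/n\big),
$$
which expresses that each of the $|U_t|$ uninformed vertices independently becomes informed with probability $|I_t|/n$ (because it picks a uniformly random vertex, which is informed with that probability).

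Taking conditional expectations and using linearity gives
$$
    \mathbb{E}\big[|I_{t+1}|\bigm\vert I_t\big]
    = |I_t| + |U_t|\cdot |I_t|/n.
$$
Substituting $|U_t| = n-|I_t|$ yields $\mathbb{E}[|I_{t+1}|\mid I_t] = |I_t| + |I_t| - |I_t|^2/n = 2|I_t| - |I_t|^2/n$, which is the second identity. For the first, I would observe dually that each uninformed vertex remains uninformed iff its chosen neighbor is uninformed, so conditional on $I_t$ the new uninformed count satisfies $|U_{t+1}| \sim \text{Bin}(|U_t|,|U_t|/n)$, whence $\mathbb{E}[|U_{t+1}|\mid I_t] = |U_t|^2/n = (|U_t|/n)^2 n$. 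Equivalently, one can read off the same formula from $\mathbb{E}[|U_{t+1}|\mid I_t] = n - \mathbb{E}[|I_{t+1}|\mid I_t]$ and the expression just derived, since $n - 2|I_t| + |I_t|^2/n = (n-|I_t|)^2/n$.

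There is no real obstacle here: both claims reduce to one application of linearity of expectation to the binomial distribution in \eqref{eq:pullBinom}. The only thing to be careful about is the bookkeeping $|U_t| = n - |I_t|$ to convert between the two forms, and noting that the $\sigma$-algebra generated by $I_t$ already determines both $|I_t|$ and $|U_t|$, which makes the conditioning well posed.
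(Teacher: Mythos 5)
Your proof is correct and follows essentially the same route as the paper's: take the conditional expectation of the binomial recurrence \eqref{eq:pullBinom} to obtain the identity for $\mathbb{E}[|I_{t+1}|\mid I_t]$, then convert via $|U_t| = n - |I_t|$. (Your alternative observation that $|U_{t+1}|\sim\text{Bin}(|U_t|,|U_t|/n)$ given $I_t$ is a harmless equivalent reformulation.)
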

\begin{proof}
From the definition of \Pull we know $|I_{t+1}|= |I_t|+\text{Bin}\big(n-|I_t|,|I_t|/n\big )$, see also \eqref{eq:pullBinom}, thus
\begin{align*}
 \mathbb{E}\big[|I_{t+1}| \bigm\vert  I_t\big]= |I_t|+ \big (n-|I_t|\big)\cdot \frac{|I_t|}{n}= 2|I_t|-\frac{|I_t|^2}{n}.
\end{align*}
Using the relation $|I_t|=n-|U_t|$  yields directly the second claim.
\end{proof}
A key property that simplifies greatly the computations in this section is the following observation, in a similar form  introduced in \cite{Daknama2020} and also applied in \cite{Daknama2021}.
\begin{lemma}\label{lemma_var}
For any $t\in \mathbb{N}_0$
\begin{align*}
\textnormal{Var}\big[|I_{t+1}|\bigm\vert  I_t\big]
\le\min\big\{ \mathbb{E}\big[|I_{t+1}|\bigm\vert  I_t\big],  \mathbb{E}\big[|U_{t+1}|\bigm\vert  I_t\big]\big\}.
\end{align*}
\end{lemma}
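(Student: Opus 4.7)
The plan is to compute the variance explicitly from the distribution of $|I_{t+1}|$ conditional on $I_t$ and then check both inequalities by elementary manipulation. Recall from \eqref{eq:pullBinom} that, given $I_t$, the random variable $|I_{t+1}| - |I_t|$ is $\mathrm{Bin}(|U_t|, |I_t|/n)$-distributed, with $|U_t| = n - |I_t|$. Since a shift does not affect the variance, the variance of a binomial is immediately
\[
    \mathrm{Var}\big[|I_{t+1}|\bigm\vert I_t\big]
    \;=\;
    |U_t|\cdot\frac{|I_t|}{n}\cdot\left(1-\frac{|I_t|}{n}\right)
    \;=\;
    \frac{|U_t|^2\,|I_t|}{n^2}.
\]

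With this closed form in hand the first bound is immediate: by Lemma~\ref{expprec} we have $\mathbb E[|U_{t+1}|\mid I_t] = |U_t|^2/n$, so
\[
    \mathrm{Var}\big[|I_{t+1}|\bigm\vert I_t\big]
    \;=\;
    \frac{|U_t|^2}{n}\cdot\frac{|I_t|}{n}
    \;\le\;
    \frac{|U_t|^2}{n}
    \;=\;
    \mathbb E\big[|U_{t+1}|\bigm\vert I_t\big],
\]
using only $|I_t|/n\le 1$. For the second bound I would factor out $|I_t|$ from the variance and rewrite $\mathbb E[|I_{t+1}|\mid I_t]=|I_t|(2-|I_t|/n)$, reducing the inequality to showing $|U_t|^2/n^2 \le 2-|I_t|/n$; since $|U_t|/n\le 1$ and $|I_t|/n\le 1$, both sides lie between $0$ and $1$, resp. at least $1$, and the inequality holds trivially. (The edge case $|I_t|=0$ is vacuous, since the conditional variance is then zero.) Taking the minimum of these two upper bounds yields the claim.

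There is no real obstacle here: once the conditional distribution of $|I_{t+1}|$ is identified as a shifted binomial, the lemma reduces to the trivial observations $|I_t|/n \le 1$ and $|U_t|/n \le 1$. The only reason the statement is worth isolating is that both bounds will be used at different stages in the proof of Lemma~\ref{Lemma2}: the bound by $\mathbb E[|U_{t+1}|\mid I_t]$ is sharp when most vertices are already informed, while the bound by $\mathbb E[|I_{t+1}|\mid I_t]$ is sharp in the opposite regime.
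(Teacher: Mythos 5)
Your proof is correct and takes essentially the same route as the paper: compute $\mathrm{Var}[|I_{t+1}|\mid I_t]=|U_t|^2|I_t|/n^2$ from the shifted binomial and compare it to the two expectations from Lemma~\ref{expprec} using $|I_t|/n\le 1$ and $|U_t|/n\le 1$. The only cosmetic difference is in verifying the second bound, where the paper simply notes $|U_t|^2|I_t|/n^2 \le |U_t||I_t|/n \le |I_t|+|U_t||I_t|/n$ while you factor out $|I_t|$ and compare $|U_t|^2/n^2\le 1\le 2-|I_t|/n$; these are the same observation.
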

\begin{proof}
As $|I_{t+1}|= |I_t|+\text{Bin}\big(|U_t|,|I_t|/n\big )$ and $|U_t|=n-|I_t|$, 
\begin{align*}
\textnormal{Var}\big[|I_{t+1}|\bigm\vert  I_t\big]
=
\text{Var}\big[|I_t| + \text{Bin}(|U_t|,|I_t|/n) \bigm\vert I_t\big]
=
\text{Var}\big[\text{Bin}(|U_t|,|I_t|/n) \bigm\vert I_t\big]
= \frac{|U_t|^2\cdot |I_t|}{n^2}.
\end{align*}
This is obviously bounded from above by $ \mathbb{E}\big[|U_{t+1}|\bigm\vert  I_t\big]=|U_t|^2/n$ as well as by $\mathbb{E}\big[|I_{t+1}|\bigm\vert  I_t\big]=|I_t|+|U_t|\cdot |I_t|/n$. 
\end{proof}
Lemma \ref{lemma_var} and Chebychev's inequality ensure that the number of informed vertices is highly concentrated around its expectation as soon as enough vertices are informed. Compare the next lemma to \cite[Lem.~3.4]{Daknama2021} for a similar statement for \emph{push}.
\begin{lemma} 
\label{concentration_chernoff}
\label{3}
Let $t_1=\lfloor \log_2(n^{1/3}) \rfloor$. For $t \in \N, \ 0 <\varepsilon<1/4$ let $C_t$ denote the event
$$
    \Big||I_{t+1}|-\mathbb{E}\big[|I_{t+1}|\bigm\vert  I_t\big]\Big|
    \leq
    M(I_t)^{1/2+\varepsilon}+ n^{\varepsilon},
    ~\text{where}~
    M(I_t)=\min \big\{\mathbb{E}\big[|I_{t+1}|\bigm\vert  I_t\big],\mathbb{E}\big[|U_{t+1}|\bigm\vert  I_t\big]\big\}.
$$ 
Then
\begin{align*}
P\left(\bigcap\limits_{t\ge t_1 }C_t\bigm\vert  I_{t_1}\right)= 1-o(1).
\end{align*}
\end{lemma}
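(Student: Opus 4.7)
The plan is to combine Chebyshev's inequality---applied at each round using the variance bound from Lemma~\ref{lemma_var}---with a union bound over those rounds $t \ge t_1$ at which the protocol has not yet terminated. The key observation is that the per-round failure probability can be made polynomially small in $n$ uniformly over all possible values of $I_t$; after that, a union bound over the $O(\log n)$ active rounds yields the claim.

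First, fix $t \ge t_1$ and condition on $I_t$. Chebyshev's inequality combined with Lemma~\ref{lemma_var} yields
$$P(\neg C_t \mid I_t) \le \frac{\mathrm{Var}[|I_{t+1}|\mid I_t]}{(M(I_t)^{1/2+\varepsilon}+n^{\varepsilon})^2} \le \frac{M(I_t)}{(M(I_t)^{1/2+\varepsilon}+n^{\varepsilon})^2}.$$
Now split into two cases according to whether $M(I_t)^{1/2+\varepsilon} \ge n^{\varepsilon}$ (equivalently $M(I_t) \ge n^{2\varepsilon/(1+2\varepsilon)}$) or not. Using $(a+b)^2\ge a^2$ in the first case yields the bound $M(I_t)^{-2\varepsilon}$, while $(a+b)^2\ge b^2$ in the second yields $M(I_t)/n^{2\varepsilon}$. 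Both expressions are at most $n^{-\delta}$ with $\delta := 4\varepsilon^2/(1+2\varepsilon) > 0$, and the bound is uniform in the value of $I_t$. (Whenever $|U_t|=0$ the variance vanishes and $C_t$ is automatic, so only rounds with $|U_t|>0$ contribute.)

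Second, with the per-round estimate in hand, the tower property gives
$$P\!\left(\bigcup_{t_1\le t\le T}\neg C_t\,\bigg|\,I_{t_1}\right) \le \sum_{t=t_1}^{T}\mathbb{E}\!\left[P(\neg C_t\mid I_t)\bigm|I_{t_1}\right] \le (T-t_1+1)\cdot n^{-\delta}.$$
It remains to choose $T$ so that $|U_T|=0$ with high probability. We take $T = C\log n$ for a sufficiently large constant $C$ and invoke the large-deviation bound~\eqref{eq:LargeDev}, which gives $P(|U_T|>0)=o(1)$; on the complement event all $C_t$ with $t>T$ are automatic. Hence the union bound contributes $O(\log n)\cdot n^{-\delta} = o(1)$, which is the desired conclusion.

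The main obstacle is getting a per-round bound that is simultaneously uniform in $I_t$ and polynomially small in $n$; the two-case split is essential, since the bound $M(I_t)^{-2\varepsilon}$ fails when $M(I_t)$ is small (e.g.\ at the very end of the process) and $M(I_t)/n^{2\varepsilon}$ fails when $M(I_t)$ is large (in the middle of the process, when $|I_t|\approx n/2$). As a self-contained alternative to~\eqref{eq:LargeDev} one could instead run a conditional induction on $\bigcap_{s<t}C_s$ showing that $|I_t|$ closely tracks the deterministic recursion $x_{s+1}=2x_s - x_s^2/n$ (which reaches $n$ within $O(\log n)$ rounds of $t_1$, starting from $x_{t_1}=\Theta(n^{1/3})$ by Corollary~\ref{ProbCond}); however, invoking the cited large-deviation bound yields the same termination guarantee more directly.
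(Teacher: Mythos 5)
Your proof is correct, and it takes a route that is genuinely a bit different from the paper's. The paper's two-case split is on $|U_t|$ versus $n^{1/2+\varepsilon/3}$: when $|U_t|$ is large, it applies Chebyshev to $|I_{t+1}|$, but for this it needs a lower bound $M(I_t)\ge n^{2\varepsilon/3}$, which in turn relies on the whp event $|I_{t_1}|\ge n^{1/4}$ from Corollary~\ref{ProbCond}; when $|U_t|$ is small, it abandons Chebyshev and instead applies Markov's inequality to $|U_{t+1}|$, exploiting that then $\mathbb{E}\big[|U_{t+1}|\bigm\vert I_t\big]\le n^{2\varepsilon/3}\ll n^{\varepsilon}$. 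You keep Chebyshev throughout and split on which of the two summands in the deviation threshold $M(I_t)^{1/2+\varepsilon}+n^{\varepsilon}$ is dominant; this yields a cleaner, fully uniform per-round bound $n^{-4\varepsilon^2/(1+2\varepsilon)}$ valid for \emph{every} possible value of $I_t$, with no appeal to Corollary~\ref{ProbCond} at this stage (and your exponent $4\varepsilon^2/(1+2\varepsilon)$ is in fact a bit larger than the paper's $\varepsilon^2$, so nothing is lost quantitatively). Both arguments then finish identically: a union bound over the $O(\log n)$ active rounds plus the large-deviation estimate~\eqref{eq:LargeDev} to discard rounds $t\ge C\log_2 n$, where $|U_t|=0$ and hence $C_t$ holds trivially.
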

\begin{proof}
Observe that Corollary \ref{ProbCond} implies whp 
\begin{align}\label{eq:Start}
   \mathbb{E}\big[|I_{t+1}|\mid I_t\big]\ge  |I_t|\ge |I_{t_1}|\ge n^{1/4}\quad \text{for all} \quad t\ge t_1.
\end{align}
Set $n' := n - n^{1/2 + \varepsilon/3}$. Then
\[
    P\left(\overline{C_t}\bigm\vert I_{t}\right)
    = P\left(\overline{C_t}\mathbf{1}_{|I_t|> n'} \bigm\vert I_{t}\right) + P\left(\overline{C_t}\mathbf{1}_{|I_t|\le n'} \bigm\vert I_{t}\right).
\]
If $|I_t|> n'$, then $|U_t| < n^{1/2 + \varepsilon/3}$ and so $M(I_t) \le \mathbb{E}[|U_{t+1}|~\vert~  I_t] = |U_{t}|^2/n \le n^{2\varepsilon/3}$. In that case $\overline{C_t}$ thus implies that $|U_{t+1}| \ge n^\varepsilon \ge n^{\varepsilon/3}\mathbb{E}[|U_{t+1}|~\vert~  I_t]$. By Markov's inequality
\[
    P\left(\overline{C_t}\mathbf{1}_{|I_t|> n'} \bigm\vert I_{t}\right)
    \le
    P\big(|U_{t+1} \ge n^{\varepsilon/3}\mathbb{E}[|U_{t+1}|~\vert~  I_t] \bigm\vert I_{t}\big)
    \le 
    n^{-\varepsilon/3}.
\]
If $|I_t|\le n'$, then $|U_t| \ge n^{1/2 + \varepsilon/3}$ and using \eqref{eq:Start} also $M(I_t) \ge  \min\{ n^{1/4},|U_{t}|^2/n\} \ge n^{2\varepsilon/3}$.
In this case, using Lemma~\ref{lemma_var}, $\overline{C_t}$  implies that
$$
    \Big||I_{t+1}| - \mathbb{E}\big[|I_{t+1}|\bigm\vert  I_t\big]\Big|> M(I_t)^{1/2 + \varepsilon} \ge n^{2\varepsilon^2/3} \text{Var}\big[|I_{t+1}|\bigm\vert  I_{t}\big]^{1/2}.
$$ By Chebychev's inequality
\begin{align*}
 P\left(\overline{C_t}\mathbf{1}_{|I_t|\le n'} \bigm\vert I_{t}\right)
&\le ~P\left (\Big||I_{t+1}| - \mathbb{E}\big[|I_{t+1}|\bigm\vert  I_t\big]\Big|> n^{2\varepsilon^2/3} \text{Var}\big[|I_{t+1}|\bigm\vert  I_{t}\big]^{1/2}\Bigm\vert I_{t}\right )\le n^{-\varepsilon^2}.
\end{align*}
By combining both cases we get the very crude bound
\begin{align}\label{eq:allTe}
 P\big(\overline{ C_t}\bigm\vert  I_t\big)\le n^{-\varepsilon^2}\quad \text{for all } t\ge t_1.
\end{align}
The large deviation bounds~\eqref{eq:LargeDev} give us that $X_n\ge 2\log_2n$ has exponentially small probability. Thus
$$P\left( \bigcup_{t\ge 2\log_2 n} \overline{C_t}\mid I_{t_1}\right)= o(1).$$
A union bound and \eqref{eq:allTe}, applied to $O(\log n)$ many $t_1 \le t\le t_2$, then yield the claim. 
\end{proof}
Lemma \ref{concentration_chernoff} shows that $|I_{t}|$ is closely concentrated around its (conditional) expectation in all rounds.
This translates directly to concentration of $|U_{t+1}|$ around $(|U_t|/n)^2n = \mathbb{E}[|U_{t+1}|\bigm\vert  U_{t}]$ for all $t\ge t_1$.
Using this, we are now ready to prove Lemma \ref{Lemma2}, that is, $|U_{{t_1}+t}|$ is close to $(|U_{t_1}|/n)^{2^t}n$ for all $t\in \mathbb{N}$ with high probability.
\begin{proof}[Proof of Lemma \ref{Lemma2}]
We assume that $|U_{t_1}|=\Theta\big(n^{1/3}\big )$, which we know from Corollary \ref{ProbCond} has high probability.
Consequently we can apply Lemma \ref{concentration_chernoff} with $\varepsilon=1/10 $ and thus we get with high probability for all $t\ge t_1$ 
\begin{align}\label{eq:concentration}
\big||I_{t+1}|-\mathbb{E}\big[|I_{t+1}|\bigm\vert  I_t\big]\big|\le \Big(\min \big\{\mathbb{E}\big[|I_{t+1}|\bigm\vert  I_t\big],\mathbb{E}\big[|U_{t+1}|\bigm\vert  I_t\big]\big\}\Big)^{3/5}+n^{1/10}.
\end{align} 
For the rest of this proof we assume in addition~\eqref{eq:concentration}, that is, we assume that $(|I_t|)_{t \ge t_1}$ (and thus also $(|U_t|)_{t \ge t_1}$ and $(\mathbb{E}[|I_{t+1}|\bigm\vert  I_t])_{t \ge t_1}$) are sequences of numbers with the aforementioned properties.
In particular, \eqref{eq:concentration} implies for all $\delta>0$ that $\big|2|I_t|-|I_{t+1}|\big|\le \delta |I_{t}|$ for all $t\ge\ t_1$ and $n>\delta^{-15}$, where $t_1 = \lfloor \log_2(n^{1/3})\rfloor$. Therefore
\begin{align}\label{eq:concAppl}
|I_{t_1+s}|\le (2+\delta)^s|I_{t_1}| \quad \text{for all}\ s\in \mathbb N_0,\ \delta>0\text{ and } n>\delta^{-15}.
\end{align}
Set
$$
    \beta_{t_1+s}
    := \big(|U_{t_1}|/n\big)^{2^{s}}, \quad s\in \mathbb{N}_0.
$$
Note that $t_1+s$ is just a different way to parameterize $t\ge t_1$, which simplifies the notation when~$t_1$ is involved. In particular $t_1$ is always fixed to the aforementioned value.

We will next argue that for all $t \ge t_1$, abbreviating $\Delta_{t}:=\big ||U_{t}|-\beta_{t}n\big|$,
\begin{equation}
\label{eq:auxassumption}
\Delta_{t+1}\le \Big(\min \big\{2|I_t|,|U_t|^2/n\big\}\Big)^{3/5}+n^{1/10}+ \Big (2|U_t|/n+\Delta_t/n\Big)\Delta_t.
\end{equation}
To see this, note first that by using \eqref{eq:concentration} and Lemma \ref{expprec},
\begin{align*}
\big ||U_{t+1}|-(|U_{t}|/n)^2n\big|= \big ||I_{t+1}|-\mathbb{E}\big[|I_{t+1}|\bigm\vert  I_t\big ]\big| \le \Big(\min \big\{2|I_t|,|U_t|^2/n\big\}\Big)^{3/5}+n^{1/10}.
\end{align*}
Secondly, applying the triangle inequality, i.e., $|x+y|\le 2|x|+|x-y|$ for all $x,y\in \mathbb{R}$, yields  
\begin{align*}
\big| (|U_{t}|/n)^2n -\beta_{t+1}n\big|& = \big ||U_t|^2/n-\beta_t^2 n\big|=\big ||U_t|/n+\beta_t \big|\cdot \big||U_t|-\beta_tn \big|\\
&\le \Big(2|U_t|/n+\big ||U_t|/n-\beta_t\big|\Big)\big ||U_t|-\beta_t n\big|.
\end{align*}
The triangle inequality then implies \eqref{eq:auxassumption}. 

In the remainder of this proof we will look at the bound of $\Delta_t$ in \eqref{eq:auxassumption} in three different ways to distinguish in each case a different behaviour. Just to wit, at first $\Delta_t$ doubles as long as the number of informed vertices doubles. However,  as soon as the doubly exponential shrinking of the uninformed vertices takes over, also the error $\Delta_t$ shrinks rapidly, so that $\Delta_t$ always remains $o\big(|U_t|\big )$.  In end we just make sure that $\Delta_t$ stays small and does not increase any more. 

We will make this outline more precise by formulating matching claims, which we then use to infer the statement of this lemma. We prove the claims afterwards. Our first claim is that for $\delta = 1/100$ and $d=400$
\begin{equation}\tag{$\text{Claim 1}$}\label{eq:recI}
\Delta_{t_1+s}\le d(2+\delta)^s|I_{t_1}|^{3/5}\quad \text{for all}\ 0\le s\le  (14/15)\log_2 n-t_1\text{ and } n>\delta^{-15}.
\end{equation}
By using that $|I_{t_1}|=\Theta (n^{1/3}$ and that $14/15-1/3+1/5=4/5<0.85$,
\eqref{eq:recI} implies for sufficiently large $n$ that for $t_2=\lfloor(14/15)\log_2n\rfloor$
\begin{align}\label{eq:Delta1}
\Delta_{t_1+s}\le d  \left (2+\delta\right )^{s+1}|I_{t_1}|^{3/5}\le n^{0.85}\quad \text{for all } 0\le s\le t_2-t_1.
\end{align}
Moreover, \eqref{eq:concAppl} yields 
\begin{align*}
|I_{t_1+s}|& \le \left (2+\delta\right )^{s+1}|I_{t_1}|= o(n) \quad\text{for all } 0\le s\le t_2-t_1,
\end{align*}
and therefore, as $|U_{t_1+s}|=n-|I_{t_1+s}|=\Theta(n)$, 
\begin{align}\label{eq:baseU}
\Delta_{t_1+s}\le |U_{t_1+s}|\cdot n^{-1/10} \quad \text{for all }0\le s\le t_2-t_1.
\end{align}
From \eqref{eq:LargeDev} we know that whp  $X_n\le \log_2 n+2\log_2\ln n $, thus we need to bound $\Delta_t$ for at most an additional $(1/15)\log_2n+2\log_2\ln n$ steps.
For these steps we will need a different bound, as the bound in \eqref{eq:recI} is only useful as long as $t_1+s < \log_2 n$; otherwise the term $(2+\delta)^s$ blows up.
Our next claim is
\begin{equation}\tag{$\text{Claim 2}$}\label{eq:recU}
\Delta_{t_2+s}\le |U_{t_2+s}|\cdot (2+\delta)^s\cdot n^{-1/11}\quad \text{for all }  s\in\mathbb N\ \text{with}\ |U_{t_2+s}|\ge n^{1/4} \text{ and } n\ge \delta^{-111}.
\end{equation}
We saw that it suffices to apply \eqref{eq:recU} for at most $s\le (1/15)\log_2n+2\log_2\ln n $ additional steps. For such $s$ we obtain that $(2+\delta)^{s+1}n^{-1/11}\le n^{-1/50}$ for sufficiently large $n$ and therefore we conclude from \eqref{eq:baseU} for all $t\le t_2$ and  \eqref{eq:recU} for all $t> t_2$  
\begin{equation*}
    \Delta_{t}
    \le |U_{t}|\cdot n^{-1/50}
    \quad \text{for all}
    \quad 
    t\ge t_1
    \quad
    \text{as long as}
    \quad
    |U_{t}|\ge n^{1/4}.
\end{equation*}
For $t\ge t_1$ such that $|U_t|\le n^{1/4}$ the claim of the Lemma follows trivially.

Now we show \eqref{eq:recI} and \eqref{eq:recU}, starting with \eqref{eq:recI}, which we are going to do by induction. 
The base case follows directly from \eqref{eq:auxassumption}. Let $t\ge t_1$, from \eqref{eq:auxassumption} and using that $|I_t|=\Omega\big(n^{1/3}\big)$ we get that
\begin{align*}
\Delta_{t+1}&\le 2|I_{t}|^{3/5}+\big(2+\Delta_{t}/n\big)\Delta_{t}.
\end{align*}
Observe that by using the induction hypothesis, we obtain that  
\begin{align*}
\Delta_{t_1+s}\le |U_{t_1+s}|\cdot n^{-1/10}
\end{align*}
and thus using \eqref{eq:concAppl}, the induction hypothesis, and $n\ge \delta^{-15}$ gives us
\begin{align*}
\Delta_{t_1+s+1}&\le 2\big((2+\delta)^s|I_{t_1}|\big)^{3/5} +(2+\delta/2)\left(d(2+\delta)^s|I_{t_1}|^{3/5}\right )\\
&\le \left (\frac{2}{d}(2+\delta)^{-2s/5}+2+\frac{\delta}{2}\right )d(2+\delta)^{s}|I_{t_1}|^{3/5}.
\end{align*}
Our choice of $d$ and $\delta$ guarantees that ${(2/d)}(2+\delta)^{(-2/5)s}\le  {(2/d)}(2+\delta)^{(-2/5)}\le \delta/2$ for all $s\in \mathbb N$ and \eqref{eq:recI} follows. 

We continue with \eqref{eq:recU} that we we will prove by induction, too. 
To that end, we observe that \eqref{eq:concentration} also implies that as long as $|U_{t+1}|\ge n^{1/4}$ and $n\ge \delta ^{-6}$
\begin{align}\label{eq:UObs}
 \big||U_t|^2/n-|U_{t+1}|\big|\le \delta |U_{t+1}| / 4.
\end{align}
The base case of \eqref{eq:recU} follows directly from \eqref{eq:baseU}. For the induction step we apply \eqref{eq:auxassumption} together with the induction hypothesis and get 
 \begin{align*}
\Delta_{t_2+s+1}&\le \left(\frac{|U_{t_2+s}|^2}{n}\right)^{3/5}+\left (2\frac{|U_{t_2+s}|}{n}+\Delta_{t_2+s}/n\right )\Delta_{t_2+s}\\
&\le \left(\frac{|U_{t_2+s}|^2}{n}\right)^{3/5}+(2+\delta/2)\frac{|U_{t_2+s}|}{n}\Delta_{t_2+s}\\
&\le \left(\frac{|U_{t_2+s}|^2}{n}\right)^{3/5}+ (2+\delta/2)(2+\delta)^{s}\cdot \frac{|U_{t_2+s}|^2}{n}\cdot n^{-1/11}.
\end{align*}
Using the assumption that $|U_{t_2+s+1}|\ge n^{1/4}$ we get, as $(1/4)\cdot(2/5)- 1/11=1/110$, that\\ $$\big(|U_{t_2+s}|^2/n\big)^{3/5} /\big(|U_{t_2+s}|^2/n\cdot n^{-1/11}\big) \le \delta/4\quad \text{for all}\quad s\in \mathbb N_0\quad \text{and}\quad  n>\delta ^{-111}.$$ This and \eqref{eq:UObs} implies \eqref{eq:recU}.
\end{proof}

\subsection{Proof of Lemma \ref{Lemma3}}\label{SubSLemma3}
We will prove this lemma in two steps, first we start with a simple lemma showing that, if there are much more than $\sqrt n$ uninformed vertices remaining, \emph{pull} will not end in the next round. Moreover if there are substantially less than $\sqrt{n}$ uninformed the protocol will end in the next round.
\begin{lemma}\label{Lemma3.1}
Let $t,t'\in \mathbb{N}$ such that  $|U_{t}|\le \sqrt{n}/\ln n$ and $|U_{t'}|\ge \sqrt{n}\ln n$. Then 
$$P(|U_{t+1}|=0\mid I_t)=o(1) \quad \text{and}\quad P(|U_{t'+1}|>0\mid I_{t'})=o(1).$$
\end{lemma}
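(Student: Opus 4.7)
The plan is to exploit the explicit binomial description of $|U_{t+1}|$ given $I_t$. By the definition of \pull, each uninformed vertex independently picks a uniformly random vertex and stays uninformed precisely when the pick is uninformed, which happens with probability $|U_t|/n$. Hence, in distribution,
$$|U_{t+1}| \mid I_t \;\sim\; \textnormal{Bin}\bigl(|U_t|,\, |U_t|/n\bigr),$$
so in particular $\mathbb{E}\bigl[|U_{t+1}| \mid I_t\bigr] = |U_t|^2/n$ and $P\bigl(|U_{t+1}| = 0 \mid I_t\bigr) = (1 - |U_t|/n)^{|U_t|}$. Both claims of the lemma are direct consequences of this formula together with one elementary inequality each.

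For the regime with few uninformed vertices ($|U_t| \le \sqrt{n}/\ln n$), the protocol should finish in the next round with high probability. I would apply Markov's inequality to the non-negative integer random variable $|U_{t+1}|$:
$$P\bigl(|U_{t+1}| > 0 \mid I_t\bigr) \;\le\; \mathbb{E}\bigl[|U_{t+1}| \mid I_t\bigr] \;=\; \frac{|U_t|^2}{n} \;\le\; \frac{1}{(\ln n)^2} \;=\; o(1),$$
which is the content of the first statement (modulo the typographical swap between $=0$ and $>0$ suggested by the preceding paragraph of the paper).

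For the regime with many uninformed vertices ($|U_{t'}| \ge \sqrt{n}\ln n$), the goal is the opposite: the protocol should not finish in the next round. Using the inequality $1-x \le e^{-x}$ for $x \in [0,1]$ I would estimate
$$P\bigl(|U_{t'+1}| = 0 \mid I_{t'}\bigr) \;=\; \bigl(1 - |U_{t'}|/n\bigr)^{|U_{t'}|} \;\le\; \exp\!\bigl(-|U_{t'}|^2/n\bigr) \;\le\; \exp\!\bigl(-(\ln n)^2\bigr) \;=\; o(1).$$

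There is no genuine obstacle in this proof; both halves are one-line computations once the binomial structure is exposed. The two logarithmic cushions in the hypotheses are exactly what is needed to push both bounds to $o(1)$ while leaving the critical window $|U_t| = \Theta(\sqrt{n})$ in between — where neither estimate is decisive — to be handled separately in the next step of the argument establishing Lemma~\ref{Lemma3}.
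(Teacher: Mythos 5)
Your proof is correct and follows essentially the same route as the paper: Markov's inequality applied to $\mathbb{E}\bigl[|U_{t+1}|\mid I_t\bigr]=|U_t|^2/n\le (\ln n)^{-2}$ in the sparse regime, and the bound $(1-|U_{t'}|/n)^{|U_{t'}|}\le e^{-|U_{t'}|^2/n}\le e^{-(\ln n)^2}$ in the dense regime. You also correctly identified that the lemma statement has the roles of $\{|U_{t+1}|=0\}$ and $\{|U_{t'+1}|>0\}$ swapped relative to what the surrounding text and the paper's own proof actually establish.
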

\begin{proof}
Note that
\begin{align*}
\mathbb{E}\big[|U_{t+1}|\bigm\vert  I_t\big]=\frac{|U_{t}|^2}{n}\le \ln ^{-2} n.
\end{align*}
This yields with  Markov's inequality the claim for $t$.
To see the claim for $t'$ we observe first that the probability of one uninformed vertex $v\in U_{t'}$ being informed in the next round is
\begin{align*}
P\big(u\in I_{t'+1}\bigm\vert  u\in U_{t'}\big)= \frac{|I_{t'}|}{n}=\left (1-\frac{|U_{t'}|}{n}\right ).
\end{align*}
If $|U_{t'+1}|=0$, then all $|U_{t'}|$ uninformed vertices need to be informed in the next round, and as that happens independently, we get
\begin{align*}
P\big(|U_{t'+1}|=0\bigm\vert  I_{t'}\big)=\left (1-\frac{|U_{t'}|}{n}\right )^{|U_{t'}|}\le e^{-|U_{t'}|^2/n}\le e^{-\ln ^{2} n}. 
\end{align*} 
\end{proof}
Besides the two cases that we considered in the previous lemma a third case is also possible. Indeed, if there are about $\sqrt{n}$ uninformed vertices, the process ending in the next round may happen with some non-trivial probability. The next lemma shows that, however, that this is very unlikely to happen. This means that once the process crosses the threshold of $\sqrt{n}$ it will terminate in the next round with high probability.

\begin{lemma}\label{Lemma3.2}  With high probability for all $t\in \mathbb N$, $$|U_{t}|\notin\Big[\sqrt{n}/ \ln n,\sqrt{n}\ln n\Big].$$
\end{lemma}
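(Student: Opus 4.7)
I split on whether $t \leq t_1 := \lfloor \log_2 n^{1/3}\rfloor$ or $t > t_1$. In the first range, Corollary~\ref{ProbCond} yields whp $|I_t| \leq |I_{t_1}| = \Theta(n^{1/3})$, whence $|U_t| \geq n - O(n^{1/3}) \gg \sqrt{n} \ln n$, and the claim is trivial.

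For $t = t_1 + s$ with $s \geq 1$ I would use Lemma~\ref{Lemma2} to couple $|U_{t_1+s}|$ to the deterministic sequence $V_s := (|U_{t_1}|/n)^{2^s} n$ up to multiplicative error $1 \pm n^{-1/50}$ and additive error $O(n^{1/4})$. A short calculation shows that if $V_s \notin I'_n := \bigl[\sqrt{n}/(2\ln n),\, 2\sqrt{n}\ln n\bigr]$, then for large $n$ one has $|U_{t_1+s}| \notin [\sqrt{n}/\ln n, \sqrt{n}\ln n]$. Thus it suffices to rule out $V_s \in I'_n$ for every $s \geq 1$. Taking $\log_2 \log_2$, the condition $V_s \in I'_n$ translates to $s + \log_2\!\bigl(-\log_2(|U_{t_1}|/n)\bigr)$ lying in an interval of length $O(\log\log n / \log n)$ centred at $\log_2\log_2\sqrt n$; since $s$ ranges over all integers, the existence of such an $s$ reduces to $\log_2\!\bigl(-\log_2(|U_{t_1}|/n)\bigr) \bmod 1$ landing in a single interval $J_n \subset [0,1)$ of length $o(1)$.

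A Taylor expansion (using $|I_{t_1}|/n = \Theta(n^{-2/3})$ whp) gives $\log_2\!\bigl(-\log_2(|U_{t_1}|/n)\bigr) = \log_2\!\bigl(|I_{t_1}|/2^{t_1}\bigr) + c_n + o(1)$ for a deterministic constant $c_n$. By Lemma~\ref{Lemma1}, the CDF of $|I_{t_1}|/2^{t_1}$ is uniformly close to that of $H$, which is continuous and almost surely positive; the same uniform closeness then transfers to $\log_2\!\bigl(|I_{t_1}|/2^{t_1}\bigr)$ versus $\log_2 H$. The law of $\log_2 H \bmod 1$ on $[0,1)$ is continuous (since $H$ is), so its CDF is uniformly continuous on the compact circle, giving $P(\log_2 H \bmod 1 \in J) = o(1)$ uniformly over intervals $J$ of length $o(1)$. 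Combining yields the claim. The main obstacle is precisely this final reduction: the event $\log_2(\cdot) \bmod 1 \in J_n$ is a \emph{countable} union of intervals in the variable $|I_{t_1}|/2^{t_1}$, so applying the uniform CDF closeness from Lemma~\ref{Lemma1} cleanly requires first truncating $H$ to a compact set (possible since $H$ is integrable) in order to reduce the union to finitely many pieces.
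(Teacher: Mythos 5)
Your proposal is correct and takes essentially the same route as the paper: condition on the concentration of $|I_{t_1}|$ (Corollary~\ref{ProbCond}) and on Lemma~\ref{Lemma2}, reduce to a deterministic sequence $(|U_{t_1}|/n)^{2^s}n$, and show it never lands in a slightly widened interval by translating that event into a small-measure condition on $\log_2(2^{-t_1}|I_{t_1}|)$ modulo $1$, then invoking the continuity of $\log_2 H$ via Lemma~\ref{Lemma1}. The paper phrases the final step through the equality of two floored stopping times $T_u=T_\ell$ rather than through $\bmod\,1$ on the circle, but these are the same calculation; you also explicitly flag (and correctly resolve via tightness/compact truncation) the countable-union subtlety that the paper handles only implicitly in its one-line justification of Event~2.
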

\begin{proof}
Let $t_1=\lfloor \log_2(n^{1/3})\rfloor$  and consider the events 
\begin{align}\tag{Event 1}\label{eq:ProbIt1}
\left\{|I_{t_1}|= \Theta\big(n^{1/3}\big)\right\},
\end{align}
and with $\eta = \eta(n) =\log_2 n + \log_2\ln n-\lfloor\log_2 n+ \log_2\ln n\rfloor$,
\begin{align}\tag{Event 2}\label{eq:ProblogIt1}
\left\{\log_2\big(2^{-t_1}|I_{t_1}|\big)\notin \bigcup_{k\in \mathbb{Z}}\left [k+\eta-\frac{3\log_{2}\ln (n^2)}{\ln n},k+\eta+\frac{3 \log_{2}\ln (n^2)}{\ln n}\right ]\right\},
\end{align}
as well as
\begin{align}\tag{Event 3}\label{eq:Lemma2}
\bigcap_{~t\ge 0} \left \{\Big ||U_{t_1+t}|-\big (|U_{t_1}|/n\big )^{2^t}n\Big |\le |U_{t_1+t}| \cdot n^{-1/50}+n^{1/4}\right \}.
\end{align}
All these events occur with high probability. 
For \eqref{eq:ProbIt1} this was already established in Corollary~\ref{ProbCond}.
To see the claim for \eqref{eq:ProblogIt1} we observe that $\log_2\big(2^{-t_1}|I_{t_1}|\big)$ converges to the  continuous random variable $\log_2H$, see Lemma \ref{Lemma1}, and that the right side in \eqref{eq:ProblogIt1} converges to a $\log_2H$ null-set. \eqref{eq:Lemma2} was handled in  Lemma \ref{Lemma2}.

In the remainder of this proof we condition on these events, that is, we assume that $(|I_t|)_{t \ge t_1}$ (and thus also $(|U_t|)_{t \ge t_1}$) are sequences of numbers with the aforementioned properties.

Observe that for $n^{1/3}<a<b<n-n^{1/3}$, \eqref{eq:ProbIt1} and \eqref{eq:Lemma2} imply for $t\ge t_1$ and $n$ large enough
$$
(|U_{t_1}|/n)^{2^{t-t_1}}n\notin [a/2,2b] 
\quad \Longrightarrow\quad 
|U_t|\notin [a,b];
$$
to see this, note that by assumption $|U_t| = (1+o(1))(|U_{t_1}|/n)^{2^{t-t_1}}n$ and so for large enough $n$ if $|U_t|\in [a,b]$, then with room to spare $(|U_{t_1}|/n)^{2^{t-t_1}}n\in [a/2,2b]$.

Set $u_t=(1-|I_{t_1}|/n)^{2^{t-t_1}}n$ and define $T_u=\min\{t\in \mathbb{N}: u_t< 2\sqrt{n} \ln n\}$ as well as $T_\ell=\min\{t\in \mathbb{N}: u_t< \sqrt{n} /(2\ln n)\}$. 
With these definitions we can apply the implication we just derived to the event in the statement of the lemma and obtain that
\begin{align}\label{eq:implication}
T_u = T_\ell
\quad \Longrightarrow \quad 
|U_t|\notin\big[\sqrt{n}/\ln n,\sqrt{n}\ln n\big].
\end{align}
Observe that \eqref{eq:ProbIt1} implies that
$$
    u_{\lfloor(4/3)\log_2n \rfloor}n
    = \big (|U_{t_1}|/n\big )^{2^{\lfloor(4/3)\log_2n \rfloor - t_1}} n 
    = o(1),
$$
thus  $T_u,T_\ell< (4/3)\log_2 n$ and consequently we need to study $u_t$ for that range of $t$ only. Therefore, using $1-x=e^{-x+O(x^2)}$ for small $x$ and $0\le t <(4/3)\log_2 n$
\begin{align*}
u_t&= n\big(1-|I_{t_1}|/n\big)^{2^{t-t_1}}=n\cdot \exp\big (-2^{t-t_1}|I_{t_1}|/n+O(2^{t-t_1}|I_{t_1}|^2/n^2)\big )\\
&=\big(1+O(n^{-2/3})\big)\cdot n\cdot \exp \left (-2^{t-t_1}|I_{t_1}|/n\right ).
\end{align*}
To determine $T_u$ and $T_\ell$ it suffices to solve the equations
\begin{align*}
\exp \left (-2^{t-t_1}|I_{t_1}|/n\right )= \big(1+O(n^{-2/3})\big)c_n n^{-1/2},\quad \text{for } c_n\in \big \{2\ln n,\ 1/2\ln n\big \}.
\end{align*}
Applying logarithms twice and using the first order expansion $\ln (1+x)=x+O(x^2),\ |x|< 1/2$, we readily obtain that  
\begin{align*}
 T_u = \Big\lfloor\log_2n-\log_2(2^{-t_1}|I_{t_1}|)+\log_2\ln n-1-\frac{2 \log_{2}\ln (n^2)}{\ln n}+O(1/\ln n)\Big\rfloor
\end{align*}
and
\begin{align*}
T_\ell = \Big\lfloor\log_2n-\log_2(2^{-t_1}|I_{t_1}|)+\log_2\ln n-1+\frac{2\log_{2}\ln (n^2)}{\ln n}+O(1/\ln n)\Big\rfloor .
\end{align*}
Observe that for any $x,y,z\in [0,1]$ with $x\le y$ it holds that $\lfloor x+z\rfloor\neq \lfloor y+z\rfloor$ if and only if $z\in [1-y,1-x)$; to see this just note that if $z < 1-y$, then both terms are equal to 0, if $z \ge 1-x$ then both terms are equal to 1 and otherwise just one of them is 0.
Thus for $\eta=\log_2 n+ \log_2\ln n-\lfloor\log_2 n+\log_2\ln n\rfloor  $  and $n$ large enough
\begin{align*}
\log_2\big(2^{-t_1}|I_{t_1}|\big)\notin \bigcup_{k\in \mathbb{Z}}\left [k+\eta-\frac{3\log_{2}\ln (n^2)}{\ln n},k+\eta+\frac{3 \log_{2}\ln (n^{2}) }{\ln n}\right ]
\quad \Longrightarrow\quad 
T_u = T_\ell. 
\end{align*}
Since we have assumed~\eqref{eq:ProblogIt1}
we have just established that $T_u = T_\ell$, and together with~\eqref{eq:implication} the proof is completed.
\end{proof}
\phantomsection

\bibliographystyle{abbrv}
\bibliography{literature}
\end{document}